\documentclass[12pt,reqno]{amsart}


\usepackage{mathabx}
\usepackage{comment}
\usepackage{multicol}
\usepackage[inline]{asymptote}
\usepackage[hidelinks]{hyperref}
\usepackage{csquotes}
\usepackage{url}
\usepackage{amsmath,amssymb}
\usepackage{graphicx}
\usepackage{centernot}
\usepackage{mathtools}
\usepackage{stmaryrd}
\usepackage{tabularx}
\usepackage{tikz}
\usepackage[shortlabels]{enumitem}
\usepackage[letterpaper, width=165mm, top=25mm,bottom=25mm,bindingoffset=6mm]{geometry}

\theoremstyle{plain}
\newtheorem{theorem}{Theorem}
\numberwithin{theorem}{section}
\newtheorem{lemma}[theorem]{Lemma}

\newtheorem{proposition}[theorem]{Proposition}

\newtheorem{problem}[theorem]{Problem}
\theoremstyle{definition}
\newtheorem{definition}[theorem]{Definition}

\newtheorem{notation}[theorem]{Notation}
\newtheorem{remark}[theorem]{Remark}
\newtheorem{example}[theorem]{Example}

\newtheorem{case}{Case}
\newtheorem{subcase}{Subcase}
\numberwithin{subcase}{case}
\newtheorem{subsubcase}{Subsubcase}
\numberwithin{subsubcase}{subcase}

\allowdisplaybreaks

\newcommand{\IN}{\mathbb{N}}
\newcommand{\IZ}{\mathbb{Z}}
\newcommand{\IQ}{\mathbb{Q}}
\newcommand{\IF}{\mathbb{F}}
\newcommand{\CT}{\operatorname{CT}}
\newcommand{\Comp}{\operatorname{Comp}}
\newcommand{\BU}{\operatorname{BU}}
\newcommand{\ite}{\operatorname{it}}
\newcommand{\Sym}{\operatorname{Sym}}
\newcommand{\id}{\operatorname{id}}
\newcommand{\FSR}{\operatorname{FSR}}
\newcommand{\lcm}{\operatorname{lcm}}
\newcommand{\AGL}{\operatorname{AGL}}
\newcommand{\GL}{\operatorname{GL}}
\newcommand{\pow}{\operatorname{pow}}
\newcommand{\ord}{\operatorname{ord}}

\newcommand{\fcp}{\operatorname{fcp}}
\newcommand{\im}{\operatorname{im}}
\newcommand{\wt}{\operatorname{wt}}

\makeatletter

\begin{document}

\title[Wreath products and cascaded FSRs]{Wreath products and cascaded FSRs}
\author[Alexander Bors, Farzad Maghsoudi, and Qiang Wang]{Alexander Bors, Farzad Maghsoudi, and Qiang Wang}
\address{School of Mathematics and Statistics, Carleton University, Ontario, Ottawa, K1S 5B6, Canada}
\email{alexanderbors@cunet.carleton.ca,
farzadmaghsoudi@cmail.carleton.ca,\newline wang@math.carleton.ca}
\subjclass[]{}
\keywords{Wreath product, Cascaded FSRs}
\begin{abstract}
 We show that the transition function of the cascaded connection of two FSRs can be viewed as a wreath product element. This allows us to study periods of cascaded connections with algebraic methods, obtaining both a general, nontrivial upper bound on the maximum period of a cascaded connection and a complete, explicit understanding of the periods in the important case of the cascaded connection of an $n$-dimensional De Bruijn sequence into an $m$-dimensional linear FSR.
\end{abstract}

{\mathversion{bold} \maketitle}

\numberwithin{theorem}{section}
\section{Introduction}
Linear feedback shift registers~(LFSRs) have a wide range of applications in coding theory, modern communication systems, and cryptography.~There has been substantial use of LFSRs as building blocks in stream ciphers because of their very good statistical properties, efficient implementations, and well-studied algebraic structures.~In contrast, stream ciphers based on LFSRs are vulnerable to correlation attacks~\cite{Aumasson} and algebraic attacks~\cite{Cannière}.~Consequently, nonlinear feedback shift registers (NFSRs) have attracted increasing attention for their nonlinear update functions.~Recently proposed stream ciphers, such as Trivium~\cite{Christophe} and Grain~\cite{Nicolas}, use NFSRs as building blocks.~Over the past 50 years, NFSRs have been examined. However, some fundamental questions remain unanswered.~In particular, there is no efficient way to determine the periods of sequences generated by an arbitrary NFSR.~The most important kind of sequence generated by an NFSR, which achieves the maximum period, of $2^n$ in the case of an $n$-stage NFSR, is a~\textit{de Bruijn} sequence. LFSRs can also achieve sequences of large periods, namely up to $2^n-1$ when having $n$ stages; the corresponding output sequences are known as $m$-sequences, and the associated transition functions are special cases of Singer cycles. However, each of these two classes of feedback shift registers (FSRs) has drawbacks from an application point of view:
\begin{enumerate}
\item De Bruijn sequences with desirable properties are costly to generate for large $n$.
\item Linearity is a property that should be avoided for many applications.
\end{enumerate}
It is thus advantageous to have a method of combining smaller FSRs, e.g.~a De Bruijn cycle and an LFSR, into larger contraptions which may achieve long cycles in higher dimensions more efficiently than De Bruijn cycles while performing better than LFSRs with respect to certain undesirable properties. This is where cascaded connections of FSRs come into play.

The cascaded connection of two FSRs was first introduced in~\cite{Green}. Specifically, the cascaded connection of $\FSR(f) $ into $\FSR(g)$ produces the same family of sequences as the FSR with characteristic function $f\ast g $~\cite{Green}. This was a motivation for Mykkeltveit, Siu and Tong to study some properties of the cycle structure of the cascaded connection of $\FSR(f)$ into $\FSR(g)$~\cite{Mykkeltveit}.~A Grain-like structure is a cascaded connection of a primitive LFSR into an NFSR.~In 2011, Hu and Gong demonstrated that the periods of the sequences generated by an NFSR in a Grain-like structure are multiples of the periods of the sequences generated by its LFSR~\cite{Hu}.~They also proposed an open problem whether the sequences generated by the NFSR in a Grain-like structure can achieve the minimum period, i.e., the period of the LFSR.~In terms of security, it is clearly undesirable for sequences generated by NFSRs in Grain-like structures to achieve the minimum period.

Recently, in 2019, Yang, Zeng and Xu in~\cite{Yang} proved the existence of a Grain-like structure achieving the minimum period by constructing a class of them for the case where the LFSR and NFSR have the same number of stages.~Inspired by their work, Wang, Zheng, Zhao and Feng in~\cite{Wang} could improve their result and also prove the existence of such Grain-like structures for the case where the number of stages of the NFSR is larger than the number of stages of the LFSR.~They also proved that there are two necessary conditions for Grain-like structures to generate maximal possible period sequences.

In 2011, Cheng, Qi and Li proposed a new mathematical tool for calculating matrices called semi-tensor products (STP)~\cite{Analysis}.~The STP method has been widely used to study Boolean networks -- see the survey papers \cite{Li, Lu} for more information. This method is also used in the study of cascaded connections of FSRs.~Particularly, in ~\cite{Liu}, using the STP of matrices, Grain-like cascaded FSRs are converted into an equivalent linear equation by declaring them as two Boolean networks.

Cascading an NFSR into an LFSR to generate long sequences has been studied as well.~In 2020, Chang, Gong and Wang using a linear algebraic approach obtained a description of the cycle structure of a cascaded connection of an arbitrary NFSR generating a de Bruijn sequence into an LFSR~\cite{Chang}.~In particular, they showed that the initial state of each cycle can be determined by solving a system of linear equations.~Their method also works for the cascaded connection of any NFSR into an LFSR.

The wreath product is a special combination of two permutation groups based on the semidirect product in group theory.~It is a fundamental concept in permutation group theory~\cite[Secion 2.6]{Dixon}. There are applications of the wreath product in describing cyclotomic permutations, computing the cycle types of certain permutations, etc. Wan and Lidl in \cite{Rudolf} used wreath products to study cyclotomic permutations. Based on this earlier work, Bors and Wang expanded those ideas in~\cite{Alexander}.~Using the imprimitive wreath product, they were also able to show that some specific functions form a permutation group on $K$ (a finite field) and characterize which of them are complete mappings of $K$~\cite{Bors}.

In this paper, we will use wreath products to study periods of cascaded connections of FSRs. This approach is more conceptual than the one of Mykkeltveit, Siu and Tong \cite[Section 2]{Mykkeltveit}, and for the special case of the cascaded connection of a De Bruijn cycle into an LFSR, it will yield results that are more explicit/stronger than the ones of Chang, Gong and Wang \cite{Chang}.

In~Section~\ref{ch:Preliminaries}, we explain how the transition function of a cascaded connection of two FSRs can be viewed as a wreath product element, and we keep this perspective throughout the rest of the paper. This is followed by Section \ref{ch:Auxiliaries}, in which we collect some auxiliary results, of algebraic nature, that are needed for our main results. Those main results are then formulated and proved in Section~\ref{results}. Specifically, in Theorem~\ref{thm 3.2}, we give a nontrivial upper bound on the maximum period that can be achieved by the cascaded connections of two FSRs. Moreover, we give a comprehensive analysis, via algebraic methods, of the possible cycle structures of cascaded connections of a De Bruijn cycle into a linear FSR, leading to Theorem \ref{longTheo}, an explicit description of those cycle structures. Finally, in Section \ref{examples}, we go through some computational examples to illustrate our method.
\section{Preliminaries}\label{ch:Preliminaries}

The purpose of this section is to introduce terminology and definitions needed throughout the paper.

\subsection{Feedback shift registers and cascaded connections}\label{subsec2P1}

An $n$-stage feedback shift register (FSR) with characteristic function
\begin{align*}
    f(x_0, x_1,\ldots, x_n) = f_1(x_0, x_1,\ldots, x_{n-1}) \oplus x_n 
\end{align*}
consists of $n$ binary storage devices called stages. Each stage associates
with a state variable $x_i \in \{0, 1\} $ which represents the current value of the stage. Moreover, the stages are arranged linearly, say from left to right, and are connected through wires with the neighboring stages as well as with a circuit representing the Boolean function $f_1$, called the \emph{update function} of the FSR, that feeds its output back to the rightmost stage. Finally, the leftmost stage is connected outward via wires; it represents the output bit in each iteration of the FSR. The following picture illustrates the situation.
\begin{figure}[!htb]\label{simplefsr}
    \includegraphics[width=120mm,height=28mm]{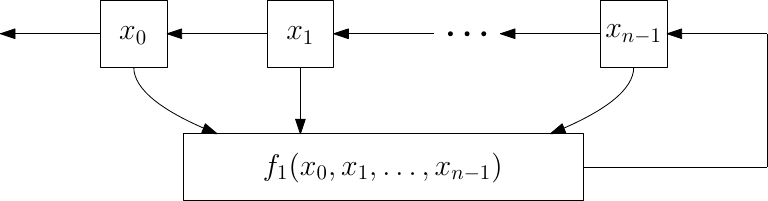}
    \caption{Basic set-up of an FSR.}
\end{figure}

In the beginning of the process of creating an output bit sequence with an FSR, its $n$ stages are initialized to certain bit values. Then, in each iteration of the process, the leftmost bit $x_0$ is output, all stages except the rightmost are overwritten with the stored bit value from their right neighbor, and the rightmost stage is overwritten with the bit $f_1(x_0,x_1,\ldots,x_{n-1})$. One may think of each stage of this process as encoded by the vector $(x_0,x_1,\ldots,x_{n-1})\in\IF_2^n$ of bits stored in the $n$ stages, and then the passing from one process iteration to the next is represented by an application of the so-called \emph{(state) transition function}
\[
\tilde{f}:\IF_2^n\rightarrow\IF_2^n, (x_0,x_1,\ldots,x_{n-1})\mapsto (x_1,x_2,\ldots,f_1(x_0,\ldots,x_{n-1})).
\]
Throughout this paper, we will use
\begin{itemize}
\item lower-case Latin letters such as $f$ or $g$ for the characteristic functions of FSRs;
\item those same letters with an added subscript $1$, as in $f_1$ or $g_1$, for the associated update function (see also below for the meaning of the corresponding notation with subscript $0$ instead of subscript $1$); and
\item those same letters with an added tilde, such as $\tilde{f}$ or $\tilde{g}$, to denote the associated transition function.
\end{itemize}
We note that an $n$-stage FSR with characteristic function $f$, which we also denote by $\FSR(f)$ as in the Introduction, is \emph{periodic}, i.e., always returns to any chosen initial state after a suitable number of iterations, if and only if its transition function $\tilde{f}$ is a permutation of $\IF_2^n$ if and only if the (Boolean) update function $f_1$ can be written in the form
\[
f_1(x_0,x_1,\ldots,x_{n-1})=x_0 \oplus f_0(x_1,x_2,\ldots,x_{n-1})
\]
for a suitable $(n-1)$-variate Boolean function $f_0$.

Next, we recall the definition of a cascaded connection of (two) FSRs.

\begin{definition}\label{def 2.1}{\cite[Definition 2]{Wang}}
Let $\FSR(f)$ be an $n$-stage FSR, and let $\FSR(g)$ be an $m$-stage FSR. The \emph{cascaded connection of $\FSR(f)$ into $\FSR(g)$}, denoted by $\FSR(f;g)$, is shown by Figure~\ref{Cascaded}.
\begin{figure}[!htb]\label{cascaded}
  \includegraphics[width=160mm,height=17mm]{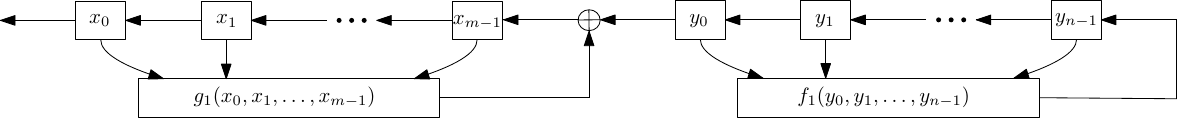}
  \caption{Cascaded Connection}\label{Cascaded}
\end{figure}
\end{definition}

Again, the leftmost bit in each iteration is the output bit, and we have a natural notion of transition function for $\FSR(f;g)$, namely the function
\begin{align*}
&\tilde{f}\ast\tilde{g}:\IF_2^{m+n}\rightarrow\IF_2^{m+n}, \\
&(x_0,\ldots,x_{m-1},y_0,\ldots,y_{n-1})\mapsto \\
&(x_1,\ldots,x_{m-1},g_1(x_0,\ldots,x_{m-1})+y_0,y_1,\ldots,y_{n-1},f_1(y_0,\ldots,y_{n-1})).
\end{align*}
The chosen notation $\tilde{f}\ast\tilde{g}$ makes sense because $f_1$ and $g_1$ can be derived from their associated transition functions $\tilde{f}$ and $\tilde{g}$, whence the transition function of $\FSR(f;g)$ can be viewed as an object depending on $\tilde{f}$ and $\tilde{g}$.

It was shown by Green and Dimond in ~\cite{Green} that the cascaded connection $\FSR(f;g)$ is equivalent (in the sense defined below) to the FSR whose characteristic function is the so-called $\ast$-product of $f$ into $g$ (to be distinguished from the $\ast$-product of transition functions introduced in the previous paragraph), which is the function in $n+m$ variables defined by the formula
\begin{align*}
  f\ast g = f(g(x_0, x_1,\ldots, x_m), g(x_1, x_2,\ldots, x_{m+1}),\ldots, g(x_n, x_{n+1},\ldots, x_{m+n})).
\end{align*}
Here, equivalence is to be understood as the existence of a bijective function $b:\mathbb{F}_2^{n+m}\rightarrow\mathbb{F}_2^{n+m}$ such that for each $\vec{x}\in\mathbb{F}_2^{n+m}$, the bit sequence produced by $\FSR(f\ast g)$ when initialized with $\vec{x}$ has the same period as the bit sequence produced by $\FSR(f;g)$ when initialized with $b(\vec{x})$.

In the next subsection, we provide a description of algebraic nature of the transition function $\tilde{f}\ast\tilde{g}$ of a cascaded connection $\FSR(f;g)$, which will be very useful in the sequel.

\subsection{Transition functions of cascaded connections as wreath product elements}The goal of this subsection is to explain how to view transition functions of cascaded connections of FSRs as wreath product elements, and how this enables an algebraic approach to study the periods of output sequences of cascaded connections. We start with the following elementary, but important observation, which reduces periods of output sequences to cycle lengths of transition functions:

\begin{lemma}\label{lemma 2.12}
Let $\FSR(f)$ be an $n$-stage FSR, and let $\FSR(g)$ be an $m$-stage FSR. Moreover, let $\Vec{v}=(x_0,\ldots,x_{m-1},y_0,\ldots,y_{n-1})^T\in\IF_2^{m+n}$. The following hold:
\begin{enumerate}
\item The output sequence produced by the cascaded connection $\FSR(f;g)$ when initialized with $\Vec{v}$ is periodic if and only if $\Vec{v}$ is a periodic point of the transition function $\tilde{f}\ast\tilde{g}$ (i.e., one eventually returns to $\Vec{v}$ after sufficiently many iterations of $\tilde{f}\ast\tilde{g}$).
\item Assume that $\Vec{v}$ is a periodic point of $\tilde{f}\ast\tilde{g}$. Then the cycle length of $\Vec{v}$ under $\tilde{f}\ast\tilde{g}$ equals the (least) period of the output sequence produced by $\FSR(f;g)$ when initialized with $\Vec{v}$.
\end{enumerate}
 \end{lemma}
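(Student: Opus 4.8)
The plan is to reduce both parts to a single structural fact: along the orbit of $\vec{v}$, the entire state of $\FSR(f;g)$ can be read off from the output sequence, so that periodicity of the state and periodicity of the output become literally the same condition. Write $\vec{v}_t=(\tilde{f}\ast\tilde{g})^t(\vec{v})$ with coordinates $(x_0^{(t)},\dots,x_{m-1}^{(t)},y_0^{(t)},\dots,y_{n-1}^{(t)})$, and let $z_t:=x_0^{(t)}$ denote the $t$-th output bit. I want to show that there is a single, time-independent map $\Psi:\IF_2^{m+n}\to\IF_2^{m+n}$ with $\vec{v}_t=\Psi(z_t,z_{t+1},\dots,z_{t+m+n-1})$ for every $t\ge 0$; I will call this the master relation.

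To establish it, I would first exploit the shift structure built into $\tilde{f}\ast\tilde{g}$. The first $m-1$ update rules and the $(m+1)$-st through $(m+n-1)$-st update rules are plain shifts, so a short induction gives $x_i^{(t)}=z_{t+i}$ for $0\le i\le m-1$ and $y_i^{(t)}=y_0^{(t+i)}$ for $0\le i\le n-1$. The one remaining coordinate, governed by $x_{m-1}^{(t+1)}=g_1(x_0^{(t)},\dots,x_{m-1}^{(t)})+y_0^{(t)}$, is then solved for $y_0^{(t)}$; substituting $x_i^{(t)}=z_{t+i}$ and $x_{m-1}^{(t+1)}=z_{t+m}$ yields $y_0^{(t)}=z_{t+m}+g_1(z_t,\dots,z_{t+m-1})$. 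Combining these three relations expresses every coordinate of $\vec{v}_t$ through the bits $z_t,\dots,z_{t+m+n-1}$, which is exactly the desired $\Psi$.

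Granting the master relation, both parts drop out. For (1), if $\vec{v}$ is a periodic point, say $(\tilde{f}\ast\tilde{g})^p(\vec{v})=\vec{v}$, then $\vec{v}_{t+p}=\vec{v}_t$ for all $t$, so $z_{t+p}=x_0^{(t+p)}=x_0^{(t)}=z_t$ and the output is periodic; conversely, if the output has period $p$, then the windows at times $t$ and $t+p$ coincide, whence $\vec{v}_{t+p}=\Psi(z_{t+p},\dots,z_{t+p+m+n-1})=\vec{v}_t$ and in particular $\vec{v}_p=\vec{v}$. Thus the set of admissible periods of the state orbit and the set of periods of the output sequence coincide, and taking minima gives (2): the cycle length of $\vec{v}$ equals the least period of the output. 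The one genuinely non-formal step is recovering the $y$-part of the state, which is not itself an output coordinate and has to be extracted by inverting the feedback relation through $g_1$; the remainder is bookkeeping, provided one fixes the convention that ``periodic'' means purely periodic and checks that the recovery formulas hold uniformly in $t$. Note that this argument nowhere assumes $\tilde{f}$ or $\tilde{g}$ to be permutations, so it applies to arbitrary $\FSR(f)$ and $\FSR(g)$.
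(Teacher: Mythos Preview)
Your proposal is correct and follows essentially the same approach as the paper: both arguments reconstruct the full state $\vec{v}_t$ from the length-$(m+n)$ window $(z_t,\dots,z_{t+m+n-1})$ of the output sequence via the identical formulas $x_i^{(t)}=z_{t+i}$ and $y_k^{(t)}=z_{t+m+k}+g_1(z_{t+k},\dots,z_{t+m+k-1})$, and then use this time-independent reconstruction map to transfer periodicity and period length back and forth. The paper frames this as an injective map $\iota$ and splits the two directions into separate bullet points, but the content is the same as your ``master relation'' $\Psi$.
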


\begin{proof}
For $ t \in \mathbb{N}_0 $, let us set $ \Vec{v}_t := (\tilde{f}\ast\tilde{g})^{t}(\Vec{v}) $. Also, let $ b_t $ denote the $ t $-th bit in the output sequence. By definition, $ b_t $ is the first entry of $ \Vec{v}_t $, and the following two statements follow from this:
\begin{itemize}
    \item If $\Vec{v}$ is a periodic point of $\tilde{f}\ast\tilde{g}$, then the associated output sequence $(b_t)_{t\geq0}$ is periodic.
    \item Under the assumption of statement (2), the period length of $ (b_t)_{t\geq 0} $ divides that of $ (\Vec{v}_t)_{t\geq 0} $, which is the cycle length of $ \Vec{v} $ under $\tilde{f}\ast\tilde{g}$.
\end{itemize}
On the other hand, for any $ t \in \mathbb{N}_0 $, consider the length $ m+n $ segment $ (b_t,b_{t+1},\ldots,b_{t+m+n-1}) $ of $ (b_t)_{t\geq 0} $. We prove that $ \Vec{v}_t $ can be reconstructed from that segment through applying a certain injective function. Indeed, $ (b_t,b_{t+1},\ldots,b_{t+m-1}) $ is equal to the length $ m $ initial segment of $ \Vec{v}_t = (v^{(t)}_0,v^{(t)}_1,\ldots,v^{(t)}_{m+n-1}) $, so the first $ m $ bits $v^{(t)}_0,v^{(t)}_1,\ldots,v^{(t)}_{m-1}$ in $ \Vec{v}_t $ can be directly read off. Moreover, for each $ k = 0,1,\ldots,n-1 $, it is easy to infer from the definitions of the involved concepts that
\[
b_{t+m+k} = g_1(b_{t+k},b_{t+k+1},\ldots,b_{t+m+k-1})+ v^{(t)}_{m+k},
\]
so that
\[
v^{(t)}_{m+k} = b_{t+m+k}+ g_1(b_{t+k},b_{t+k+1},\ldots,b_{t+m+k-1})
\]
can also be reconstructed. Therefore, there is an injective function $ \iota: \IF_2^{m+n} \rightarrow \IF_2^{m+n} $ such that $ \iota(b_t,b_{t+1},\ldots,b_{t+m+n-1}) = \Vec{v}_t $ for all $ t \geq 0 $. This implies the following two statements and concludes the proof:
\begin{itemize}
    \item If the output sequence $(b_t)_{t\geq0}$ is periodic, then $\Vec{v}$ is a periodic point of $\tilde{f}\ast\tilde{g}$.
    \item Under the assumption of statement (2), the period of $(\Vec{v}_t)_{t\geq 0} $, which is the cycle length of $ \Vec{v} $ under $\tilde{f}\ast\tilde{g}$, divides the period of the output sequence $(b_t)_{t\geq 0}$.\qedhere
\end{itemize} 
\end{proof}

Now we turn to explaining our algebraic approach for studying cycle lengths of transition functions of cascaded connections. First, we need to explain what a wreath product is.

Let $X$ and $\Omega$ be sets. We view the Cartesian product $X\times\Omega$ as a disjoint union of $|\Omega|$ copies of $X$, indexed by the elements of $\Omega$.

For example, if $\Omega=\{\omega_1, \omega_2, \omega_3\}$, then 
\begin{align*}
    X\times\Omega &= (X\times\{\omega_1\})~~\dot{\cup}~~(X\times\{\omega_2\})~~\dot{\cup}~~(X\times\{\omega_3\}) \\& = X_{\omega_1}~\dot{\cup}~X_{\omega_2}~\dot{\cup}~X_{\omega_3}.
\end{align*}
In the following definition and beyond, we use the exponent notation for function values, writing $x^f$ instead of $f(x)$. This is a common use of notation in group theory when $f$ is an element from a group that acts on a set of which $x$ is an element. We may still sometimes write $f(x)$ where this improves readability. We also use the notations $\textnormal{Sym}(X)$ to denote the symmetric group over the set $X$ (the group of all permutations of $X$ under function composition).

\begin{definition}
Let $ G \leq \textnormal{Sym}(X) $ and $ P \leq \textnormal{Sym}(\Omega)  $ be permutation groups. The \textit{(imprimitive) permutational wreath product} of $ G $ and $ P $, written $ G \wr P $, is the subgroup of $ \textnormal{Sym}(X \times \Omega) $ consisting of all permutations that preserve the above partition of $ X\times\Omega $, permuting the $ \Omega$-indexed blocks according to some element of $ P $. That is, each element of $ G \wr P $ is the composition of a permutation $ \sigma \in P \leq \textnormal{Sym}(\Omega) $ that permutes the blocks according to the rule $ (x, \omega) \mapsto (x, \omega^{\sigma}) $, and a tuple $ (f_{\omega})_{\omega \in \Omega} $ of permutations in $ G $ that permutes each block among itself, according to the rule $ (x, \omega) \mapsto (x^{f_{\omega}}, \omega) $.~The representation of an element of $ G \wr P $ in the form $ \sigma(f_{\omega})_{\omega \in \Omega} $ is unique.
\end{definition}

  In fact, $ \{(x, \omega) \mapsto (x, \omega^{\sigma}): \sigma \in P \} $ and $ \{(x, \omega) \mapsto (x^{f_{\omega}}, \omega) : (f_{\omega})_{\omega \in \Omega} \in G^{\Omega} \} $ are subgroups of $ G \wr P $, isomorphic to $ P $ and (the direct power) $ G^{\Omega} $, respectively, and $ G \wr P = P \ltimes G^{\Omega} $ (a semidirect product of $P$ and $G^{\Omega}$), where the conjugation action of $ P $ on the normal subgroup $ G^{\Omega} $ is by permuting coordinates, i.e.,
\begin{align*}
    (f_{\omega})_{\omega \in \Omega}^{\sigma} = \sigma^{-1}(f_{\omega})_{\omega \in \Omega}~\sigma = (f_{\sigma^{-1}(\omega)})_{\omega \in \Omega}.
\end{align*}
For example, if $ \Omega = \{\omega_1, \omega_2, \omega_3\} $ and $\sigma$ is the $3$-cycle $ (\omega_1, \omega_2, \omega_3) $, then 
\begin{align*}
    (f_{\omega_1}, f_{\omega_2}, f_{\omega_3})^{\sigma} = (f_{\sigma^{-1}(\omega_1)}, f_{\sigma^{-1}(\omega_2)}, f_{\sigma^{-1}(\omega_3)}) = (f_{\omega_3}, f_{\omega_1}, f_{\omega_2}).
\end{align*}

We note that the imprimitive wreath product of permutation groups has a natural generalization to transformation semigroups, which will also be needed in the sequel: if $G$ is a subsemigroup of $X^X$, the semigroup of all functions $X\rightarrow X$ under composition, and if $P$ is a subsemigroup of $\Omega^\Omega$, then $G\wr P$ consists of all functions $\Omega\times X \rightarrow \Omega\times X$ of the form
\[
\sigma(g_{\omega})_{\omega\in\Omega}: (x,\omega)\mapsto (x^{f_{\omega^\sigma}},\omega^\sigma).
\]
Before we proceed to explain how the cycle structure of a wreath product element may be determined, we clarify how the concept of a wreath product relates to cascaded connections of FSRs.

\begin{proposition}\label{prop 2.10}
Let $\FSR(f)$ be an $n$-stage FSR, and let $\FSR(g)$ be an $m$-stage FSR. Moreover, let $\Vec{t}=(0,\ldots,0,1)^T\in\IF_2^m$, and denote by $\rho(\Vec{t}):\IF_2^m\rightarrow\IF_2^m$ the translation $\Vec{v}\mapsto\Vec{v}+\Vec{t}$. Finally, denote by $\pi_1:\IF_2^n\rightarrow\IF_2$ the projection onto the first coordinate.

The transition function $\tilde{f}\ast\tilde{g}$ of the associated cascaded connection is equal to the wreath product element
\[
\tilde{f}\cdot(\tilde{g}\rho(\Vec{t})^{\pi_1(\Vec{y})})_{\Vec{y}\in \mathbb{F}_2^n} \in ({\IF_2^m}^{(\IF_2^m)}) \wr ({\IF_2^n}^{(\IF_2^n)}).
\]
Moreover, if both $\FSR(f)$ and $\FSR(g)$ are periodic, then so is $\FSR(f;g)$, and $\tilde{f}\ast\tilde{g}$ then lies in the permutational imprimitive wreath product $\Sym(\IF_2^m)\wr\Sym(\IF_2^n)$.
\end{proposition}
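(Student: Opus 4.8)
The plan is to prove both assertions by a direct coordinate computation, deriving the explicit wreath-product identity first and then reading off periodicity as a corollary. I would begin by fixing the identification of $\IF_2^{m+n}$ with the set $\IF_2^m\times\IF_2^n$ underlying the wreath product, which is forced by the coordinate order in the definition of $\tilde{f}\ast\tilde{g}$: a state $(x_0,\ldots,x_{m-1},y_0,\ldots,y_{n-1})$ is read as the pair $(\vec{x},\vec{y})$ with $\vec{x}=(x_0,\ldots,x_{m-1})\in\IF_2^m$ the base coordinate (so $X=\IF_2^m$) and $\vec{y}=(y_0,\ldots,y_{n-1})\in\IF_2^n$ the block index (so $\Omega=\IF_2^n$). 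Under this identification $\tilde{f}$ is the top part $\sigma\in{\IF_2^n}^{(\IF_2^n)}$ and $(\tilde{g}\rho(\vec{t})^{\pi_1(\vec{y})})_{\vec{y}\in\IF_2^n}$ is the base tuple in ${\IF_2^m}^{(\IF_2^m)}$, as required for membership in the stated wreath product.

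Next I would unwind the semigroup wreath-product action on a general pair $(\vec{x},\vec{y})$. The block index is sent to $\tilde{f}(\vec{y})=(y_1,\ldots,y_{n-1},f_1(y_0,\ldots,y_{n-1}))$, which already matches the last $n$ coordinates of $\tilde{f}\ast\tilde{g}$ by the definition of $\tilde{f}$; simultaneously the base coordinate $\vec{x}$ is acted on by the permutation $\tilde{g}\rho(\vec{t})^{\pi_1(\vec{y})}$ attached to the incoming block $\vec{y}$. For the base component I would compute $\tilde{g}(\vec{x})=(x_1,\ldots,x_{m-1},g_1(x_0,\ldots,x_{m-1}))$ and note that $\rho(\vec{t})$ simply flips the last coordinate, since $\vec{t}=(0,\ldots,0,1)^T$. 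Splitting into the cases $\pi_1(\vec{y})=y_0=0$ and $y_0=1$ then shows $\tilde{g}(\vec{x})\rho(\vec{t})^{y_0}=(x_1,\ldots,x_{m-1},g_1(\vec{x})+y_0)$, which is exactly the first $m$ coordinates of $\tilde{f}\ast\tilde{g}$. Comparing both components on every $(\vec{x},\vec{y})$ establishes the identity.

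For the ``moreover'' clause I would invoke the characterization of periodicity from Subsection~\ref{subsec2P1}: $\FSR(f)$ and $\FSR(g)$ periodic means $\tilde{f}\in\Sym(\IF_2^n)$ and $\tilde{g}\in\Sym(\IF_2^m)$. The translation $\rho(\vec{t})$ is always a permutation of $\IF_2^m$, so each base component $\tilde{g}\rho(\vec{t})^{\pi_1(\vec{y})}$ is a composition of permutations and hence lies in $\Sym(\IF_2^m)$; together with $\tilde{f}\in\Sym(\IF_2^n)$ this places the element in $\Sym(\IF_2^m)\wr\Sym(\IF_2^n)\leq\Sym(\IF_2^{m+n})$. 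In particular $\tilde{f}\ast\tilde{g}$ is then a permutation, so $\FSR(f;g)$ is periodic by the same characterization.

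The one step needing genuine care is the bookkeeping of the wreath-product convention: I must make sure the permutation applied to $\vec{x}$ is the one indexed by the first coordinate $y_0$ of the incoming block $\vec{y}$, rather than by a coordinate of the outgoing block $\tilde{f}(\vec{y})$, because it is precisely this $y_0$ that produces the $+y_0$ correction in the $m$-th output coordinate. Once the indexing and the composition order are pinned down against the explicit formula for $\tilde{f}\ast\tilde{g}$, the remainder is routine substitution.
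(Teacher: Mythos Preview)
Your proposal is correct and follows essentially the same approach as the paper's proof: both identify $\IF_2^{m+n}$ with $\IF_2^m\times\IF_2^n$, observe that the $\Omega$-coordinate is moved by $\tilde{f}$ while the $X$-coordinate is updated by $\tilde{g}$ or $\tilde{g}\rho(\vec{t})$ according to $y_0=\pi_1(\vec{y})$, and then deduce the periodicity claim from the fact that all ingredients are bijections. Your explicit caution about which block (incoming versus outgoing) indexes the base permutation is well placed and matches the paper's own usage in its proof.
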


\begin{proof}
Let $X:=\IF_2^m$ and $\Omega:=\IF_2^n$. Then $\IF_2^{m+n}=X\times\Omega $, and note the following about $\tilde{f}\ast\tilde{g}$:
\begin{enumerate}
    \item It maps each copy
    \begin{align*}
        X \times \{(y_0, y_1,\ldots, y_{n-1})^{T}\} = \{(x_0, x_1,\ldots, x_{m-1}, y_0,\ldots, y_{n-1})^{T} : x_0,\ldots,x_{m-1} \in \mathbb{F}_2\}
    \end{align*}
    of $ X $ into another such copy, namely to $ X \times \{\tilde{f}(y_0, y_1,\ldots, y_{n-1})^{T}\} $. Moreover, if $\FSR(f)$ is periodic, then $\tilde{f}$ is surjective onto $\mathbb{F}_2^n$, whence each copy is \enquote{hit} in that case.
    \item After mapping $X\times\{\Vec{y}\} $ to $X\times\{\tilde{f}(\Vec{y})\}$ without altering the $X$-part, we can get the actual $(\tilde{f}\ast\tilde{g})$-value by applying either $\tilde{g}$ or $\tilde{g}\rho(\Vec{t})$ to the $X$-part, depending on whether or not $y_0=\pi_1(\Vec{y})=0$. Moreover, if $\FSR(g)$ is periodic, then both $\tilde{g}$ and $\rho(\Vec{t})$ are surjective onto $\mathbb{F}_2^m$, whence each point in each block is \enquote{hit}.   
\end{enumerate}
Because $\Sym(\IF_2^m)\wr\Sym(\IF_2^n)$ is the set intersection of $\Sym(\IF_2^{m+n})$ and $({\IF_2^m}^{(\IF_2^m)}) \wr ({\IF_2^n}^{(\IF_2^n)})$, the proof is complete.\qedhere
\end{proof}

Henceforth, for the sake of simplicity, we restrict ourselves to only considering \emph{periodic} FSRs (and cascaded connections thereof); these are also the most relevant for practical applications. In view of Lemma \ref{lemma 2.12}, if $\FSR(f)$ and $\FSR(g)$ are periodic FSRs, then understanding the periods of output sequences of the cascaded connection $\FSR(f;g)$ is essentially the same as understanding the cycle lengths of the transition function $\tilde{f}\ast\tilde{g}$. Formally, the cycle structure of a permutation is encoded in its so-called cycle type, defined as follows.

\begin{definition}
    The \textit{cycle type} of a permutation $ 
\sigma $ of a finite set $ X $ is the monomial
\begin{align*}
    \textnormal{CT}(\sigma) := x_1^{e_1}x_2^{e_2}\cdots x_{|X|}^{e_{|X|}} \in \mathbb{Q}[x_n : n \in \mathbb{N}^+] ,
\end{align*}
where $ e_{\ell} $ for $ \ell \in \{1, 2, \ldots, |X|\} $ is the number of cycles of length $ \ell $ of $ \sigma $.
\end{definition}

Specifically for understanding the cycle types of elements of imprimitive permutational wreath products, the following concept is crucial.

\begin{definition}\label{fcpDef}\cite[Definition 3.4]{Alexander}
    Let $G \leq \textnormal{Sym}(\Omega)$ be a permutation group on the finite set $\Omega$. Let $d$ be a positive integer, let $\psi \in \textnormal{Sym}(d)$, and let $g_0, g_1,\ldots, g_{d-1} \in G$. Consider the element $ g = (\psi, (g_0, g_1,\ldots, g_{d-1}))$ of the imprimitive permutational
 wreath product $G \wr \textnormal{Sym}(d)$. For each cycle $\zeta = (i_0, i_1,\ldots, i_{\ell-1})$ of $\psi$ we call an element of $G$ of the form
 \begin{align*}
    \fcp_{\zeta,i_0}(g) := g_{i_0}g_{i_1}\cdots g_{i_{\ell-1}}
 \end{align*}
 a \textit{forward cycle product} of $ g $ with respect to $ \zeta $.
\end{definition}

 In the context of Definition \ref{fcpDef}, we note that there may be several forward cycle products for a given cycle $\zeta$ and tuple $\vec{g}=(g_0,\ldots,g_{d-1})$, depending on the chosen starting point $i_0$ of $\zeta$. However, all forward cycle products for given $\zeta$ and $\vec{g}$ are cyclic shifts of each other and, in particular, they are conjugate elements of $G$ and thus have the same cycle type.
 
 The cycle types of elements of (imprimitive) permutational wreath products have been studied by P{\'o}lya~\cite{Polya}, and we describe the method in the following remark.

 \begin{remark}\label{remark 2.6}
  Let $ g = \sigma\cdot(f_{\omega})_{\omega\in \Omega} \in G \wr P $, and let $ \omega \in \Omega $. The set of all points on the $ \sigma $-cycle of $ \omega $ is denoted by $ \omega^{\langle \sigma \rangle} $. Then the subset $ M_{\omega} := \dot{\bigcup}_{\xi \in \omega^{\langle \sigma \rangle}}(X\times\{\xi\}) $ of $ X \times \Omega $ is mapped to itself by $ g $, so it is a disjoint union of cycles of $ g $. To determine the cycle type $\CT(g) $, proceed as follows:
 \begin{enumerate}
     \item Writing the cycle of $ \omega $ under $ \sigma $ as $ (\omega_0, \omega_1, \ldots, \omega_{\ell -1}) $, where $ \omega = \omega_0 $, compute the forward cycle product
     \begin{align*}
         f := f_{\omega_0}f_{\omega_1}\cdots f_{w_{\ell -1}} \in G \leq \Sym(X)
     \end{align*}
     and its cycle type $\CT(f) $.
     \item The cycle type of $ g|_{M_{\omega}} $ is the so-called $ \ell $-blow-up of $ \textnormal{CT}(f) $:
     \begin{align*}
         \CT(g|_{M_{\omega}}) = \BU_{\ell}(\CT(f)),
     \end{align*}
    where $ \BU_{\ell} $ is the unique $\IQ$-algebra endomorphism of $ \IQ[x_n : n\in \mathbb{N}^+] $ with $\BU_{\ell}(x_n) = x_{\ell\cdot n} $ for all $ n \in \mathbb{N}^+ $ (all cycle lengths are \enquote{blown up} by the factor $\ell$). 
    \item To get $\CT(g)$ as a whole, take the product of the cycle types computed in point (2) for all the (disjoint) cycles of $\sigma$ on $\Omega$.
 \end{enumerate}    
 \end{remark}

 \begin{example}
Let $ X = \{1,2,3,4,5\} $, $ \Omega = \{1,2,3\} $ and $ g = \sigma\cdot(f_1,f_2,f_3) $, where $ \sigma = (1,2,3) $, $ f_1 = (1,2,3)(4,5) $, $ f_2 = (1,2)(4,5) $, and $ f_3 = (1,4,3,2) $. Since $ \sigma $ is a long cycle, we have $ M_1 = X\times\Omega $ and $\CT(g) = \CT(g|_{M_1}) $. So,
\begin{align*}
    \CT(g) = \CT(g|_{M_1}) &= \BU_3(\CT(f_1\cdot f_2\cdot f_3))\\&= \BU_3(\CT((1,2,3)(4,5)\cdot(1,2)(4,5)\cdot(1,4,3,2))) \\&= \BU_3(\CT((1,4,3)))\\&= \BU_3(x_1^2x_3)\\&= x_3^2x_9.
\end{align*}
This means that $ g $ consists of two cycles of length 3 and one cycle of length 9.
 \end{example}

At this point, a comparison with the method of Mykkeltveit, Siu and Tong \cite[Section 2]{Mykkeltveit} seems appropriate. Please note that each cycle of the block permutation $\sigma$ is associated with a corresponding set of cycles of $g=\sigma(f_{\omega})_{\omega\in\Omega}$, which are just \enquote{blown up} cycles of an associated forward cycle product. A close look at \cite[Theorem 2.1]{Mykkeltveit} reveals that for a fixed cycle of $\sigma$, the associated set of \enquote{blown up} cycles is what Mykkeltveit, Siu and Tong call $C_j$ (when the cycle of $\sigma$ is represented by the periodic bit sequence $a_j$). The fact that the \enquote{blow-up factor} is the length of the associated cycle of $\sigma$ is reflected in their observation that the said cycle length divides the length of any associated blown up cycle, see \cite[Theorem 2.1(d)]{Mykkeltveit}. Finally, the sequences in Mykkeltveit-Siu-Tong's set $\theta(f)^{-1}(a_j)$ correspond to the cycles of a certain forward cycle product (fixed with the choice of the sequence $a_j$).

The upshot of this discussion is that the reduction argument furnished by \cite[Theorem 2.1]{Mykkeltveit} can, in fact, be traced back to much older ideas of P{\'o}lya, and as we will see in our Section \ref{results}, this conceptual algebraic perspective leads to new results on periods of cascaded connections of FSRs, the proofs of which are quite natural but would have been much harder to come up with while following the technical approach of \cite{Mykkeltveit}.

\section{Auxiliary algebraic results}\label{ch:Auxiliaries}

In this section, we recall a result of the first and third authors' paper~\cite{Bors} and then we state and prove some lemmas which are going to be used in the proofs of our main results in Section~\ref{results}.

 \begin{proposition}\cite[Proposition 2.1]{Bors}\label{prop 2.12}
     Let $q > 1$ be a power of a prime $p$, let $Q, U \in \mathbb{F}_q[X]$ with $ Q \neq X $
 monic irreducible, and let $e$ be a positive integer. Consider the affine permutation
$$ \lambda(X,U): R+(Q^e) \mapsto RX+U+(Q^e) $$
of $ \mathbb{F}_q[X]/(Q^e) $.
\begin{enumerate}
    \item If $ Q \neq X-1 $, then $\lambda(X,U)$ has the following cycle count (independently of $U$):
    \begin{itemize}
        \item 1 fixed point;
        \item $ \frac{q^{\deg(Q)}-1}{\textnormal{ord}(Q)} $ cycles of length $ \textnormal{ord}(Q) $;
        \item for each $ a = 1, 2 ,\ldots, \lceil \log_p(e) \rceil - 1 $: $ \frac{q^{p^{a-1}\deg(Q)}(q^{\deg(Q)p^{a-1}(p-1)}-1)}{p^a\textnormal{ord}(Q)} $ cycles of length $ \textnormal{ord}(Q)p^a $; and
        \item $\frac{q^{p^{\lceil \log_p(e)\rceil-1}\deg(Q)}(q^{\deg(Q)(e-p^{\lceil \log_p(e)\rceil-1})}-1)}{p^{\lceil \log_p(e)\rceil}\textnormal{ord}(Q)}$ cycles of length $ \textnormal{ord}(Q)p^{\lceil \log_p(e)\rceil} $.
    \end{itemize}
    \item If $ Q = X-1 $ and $ U + (Q^e) $ is a non-unit in $ \mathbb{F}_q[X]/(Q^e) $, then $ \lambda(X,U) $ has the following cycle count:
    \begin{itemize}
        \item $ q $ fixed points;
        \item for each $ a = 1, 2 ,\ldots, \lceil \log_p(e) \rceil - 1 $: $ \frac{q^{p^{a-1}}(q^{p^{a-1}(p-1)}-1)}{p^a} $ cycles of length $ p^a $; and
        \item $ \frac{q^{p^{\lceil \log_p(e)\rceil-1}}(q^{e-p^{\lceil \log_p(e)\rceil-1}}-1)}{p^{\lceil \log_p(e)\rceil}} $ cycles of length $ p^{\lceil \log_p(e)\rceil} $.
    
    \end{itemize}
    \item If $ Q = X-1 $ and $ U+(Q^e) $ is a unit in $ \mathbb{F}_q[X]/(Q^e) $, then $ \lambda(X,U) $ has $ \frac{q^e}{p^{\lfloor \log_p(e)\rfloor+1}} $ cycles, all of length $ p^{\lfloor \log_p(e)\rfloor+1} $.
\end{enumerate}
 \end{proposition}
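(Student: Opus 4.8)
The plan is to split into the cases $Q\neq X-1$ and $Q=X-1$, the dividing line being whether $X-1$ is invertible modulo $Q^e$.

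\emph{Part (1): $Q\neq X-1$.} Since $Q$ is irreducible and equal to neither $X$ nor $X-1$, it divides neither, so $X$ and $X-1$ are both units in $\mathbb{F}_q[X]/(Q^e)$. In particular $\lambda(X,U)$ has the unique fixed point $r_0=-U(X-1)^{-1}$, and conjugating by the translation $r\mapsto r-r_0$ replaces $\lambda(X,U)$ by the pure multiplication map $s\mapsto Xs$; this already explains the independence of the cycle type from $U$. As $X$ is a unit, multiplication by $X$ preserves the $Q$-adic valuation $\nu_Q$, so each stratum $\{s:\nu_Q(s)=j\}$ is invariant. Writing $s=Q^j u$ with $u$ a unit, one has $(X^k-1)s=0$ iff $Q^{e-j}\mid X^k-1$, so every element of the stratum has the same cycle length $\ord_{Q^{e-j}}(X)$ and the stratum splits into $(\text{its size})/\ord_{Q^{e-j}}(X)$ cycles; the stratum $\{0\}$ accounts for the single fixed point.

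The crux is the order formula $\ord_{Q^i}(X)=\ord(Q)\,p^{\lceil\log_p i\rceil}$, where $\ord(Q)$ is the order of $X$ modulo $Q$. Since $d:=\ord(Q)$ divides $q^{\deg Q}-1$ it is prime to $p$, hence $X^d-1$ is separable and $\nu_Q(X^d-1)=1$; write $X^d=1+Q\beta$ with $\beta$ a unit. The characteristic-$p$ identity $(1+Q\beta)^{p^a}=1+Q^{p^a}\beta^{p^a}$ shows that $X^d$ has order $p^{\lceil\log_p i\rceil}$ in the $p$-group $1+(Q)$ modulo $Q^i$; splitting $(\mathbb{F}_q[X]/(Q^i))^{\times}$ into its $p$- and $p'$-parts then gives the formula. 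Substituting it into the stratum count and collecting the strata with a fixed value $a=\lceil\log_p(e-j)\rceil$, the elements of cycle length $\ord(Q)p^a$ are precisely those of $(Q^{e-p^a})\setminus(Q^{e-p^{a-1}})$, numbering $q^{p^a\deg Q}-q^{p^{a-1}\deg Q}=q^{p^{a-1}\deg Q}(q^{p^{a-1}(p-1)\deg Q}-1)$; a telescoping of the ideal sizes $|(Q^{i})|=q^{(e-i)\deg Q}$ yields exactly the stated counts, the top index $a=\lceil\log_p e\rceil$ being truncated at valuation $0$.

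\emph{Parts (2) and (3): $Q=X-1$.} Now $X-1=Q$ is a non-unit, so the linearization above fails and I would argue through the iteration formula directly. With $S_k:=1+X+\cdots+X^{k-1}$ one has $X^k-1=QS_k$, hence
\[
\lambda(X,U)^k(r)-r=(X^k-1)r+US_k=S_k\,(Qr+U),
\]
so the cycle length of $r$ is the least $k$ with $S_k\,(Qr+U)=0$. Writing $X=1+Q$ and $k=p^{\nu_p(k)}m$ with $p\nmid m$, the identity $(1+Q)^k=(1+Q^{p^{\nu_p(k)}})^m$ gives $\nu_Q(S_k)=p^{\nu_p(k)}-1$. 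If $U$ is a unit (part (3)) then $Qr+U$ is a unit for every $r$, the condition becomes $S_k=0$, i.e.\ $\nu_Q(S_k)\ge e$, and the least such $k$ is the uniform value $p^{\lfloor\log_p e\rfloor+1}$, giving $q^e/p^{\lfloor\log_p e\rfloor+1}$ cycles. If $U$ is a non-unit (part (2)) then $r\mapsto Qr+U$ is a $q$-to-one map onto $(Q)$, so the number of $r$ with $\nu_Q(Qr+U)=e-i$ equals $q$ times the number of valuation-$(e-i)$ elements of the ring; the $q$ preimages of $0$ are the fixed points, while for $1\le i\le e-1$ the cycle length is $p^{\lceil\log_p(i+1)\rceil}$, and the same telescoping as in part (1) produces the two remaining cycle lists.

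The main obstacle is establishing the two valuation statements $\ord_{Q^i}(X)=\ord(Q)\,p^{\lceil\log_p i\rceil}$ and $\nu_Q(S_k)=p^{\nu_p(k)}-1$; both hinge on the characteristic-$p$ behaviour of $p$-th powers in $1+(Q)$ together with the separability fact $\nu_Q(X^{\ord(Q)}-1)=1$. Once these are in hand, the passage to the explicit cycle counts is a routine, if bookkeeping-heavy, stratify-and-telescope computation.
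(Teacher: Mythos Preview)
The paper does not actually prove this proposition: it is quoted verbatim from \cite[Proposition~2.1]{Bors} and used as a black box, with only a remark correcting a typographical error in the original. There is therefore no in-paper argument to compare against.

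That said, your proof sketch is sound and follows what is essentially the natural line. In part~(1) the conjugation by the translation $r\mapsto r-r_0$ reducing to pure multiplication by $X$ is exactly right, and the key order identity $\ord_{Q^i}(X)=\ord(Q)\,p^{\lceil\log_p i\rceil}$ follows as you say from the Frobenius identity $(1+Q\beta)^{p^a}=1+(Q\beta)^{p^a}$ together with $\nu_Q(X^{\ord(Q)}-1)=1$; the paper in fact re-derives this last valuation fact inside the proof of Lemma~\ref{lem 2.17}. The stratification by $\nu_Q$ and the telescoping of ideal sizes then produce the listed counts. For $Q=X-1$, your factorisation $\lambda(X,U)^k(r)-r=S_k(Qr+U)$ and the computation $\nu_Q(S_k)=p^{\nu_p(k)}-1$ are correct and lead directly to the uniform cycle length $p^{\lfloor\log_p e\rfloor+1}$ in the unit case and to the stratified count in the non-unit case. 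One very minor point of phrasing: in part~(2) you want ``$q$ times the number of valuation-$(e-i)$ elements of $(Q)$'' rather than ``of the ring'', though for $e-i\ge 1$ these coincide, so the computation is unaffected.
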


We remark that the version of Proposition \ref{prop 2.12} published in \cite{Bors} contains a typographical error: in the last bullet point in statement (1), the first exponent of $q$ was wrongly typed as $p^{\lceil\log_p(e)\rceil}\deg(Q)$, as opposed to the correct value of $p^{\lceil\log_p(e)\rceil-1}\deg(Q)$ given here. Moreover, statement (3) in our Proposition \ref{prop 2.12} was unnecessarily split into two parts in \cite[Proposition 2.1]{Bors} using slightly different formulas ($\lceil\cdots\rceil$ instead of $\lfloor\cdots\rfloor$).

In what follows, we work with (primary) rational canonical forms of matrices, which we recall briefly. Let $K$ be a field. For a given monic degree $n$ univariate polynomial $P=X^n+a_{n-1}X^{n-1}+\cdots+a_1X+a_0\in K[X]$, the \emph{companion matrix of $P$}, written $\textnormal{Comp}(P)$, is the following $(n\times n)$-matrix over $K$:
\[
\textnormal{Comp}(P)=
\begin{pmatrix}
    0 & 0 & \cdots & 0 & -a_0 \\
    1 & 0 & \cdots & 0 & -a_1 \\
    0 & 1 & \cdots & 0 & -a_2 \\
    \vdots & \vdots & \vdots & \vdots & \vdots \\
    0 & 0 & \cdots & 1 & -a_{n-1}
\end{pmatrix}.
\]
Please note that $\textnormal{Comp}(P)$ is the matrix representing the ($K$-linear) modular multiplication by $X$ on the polynomial residue class ring $K[X]/(P)$ with respect to the $K$-basis $1+(P),X+(P),X^2+(P),\ldots,X^{n-1}+(P)$. One can turn the $K$-vector space $K^n$ into a $K[X]$-module by declaring that for all $Q\in K[X]$ and all $v\in K^n$, one has $Q\cdot v:=Q(A)v$. Using an important structural theorem which states that any module over a principal ideal domain (PID), such as $K[X]$, is a direct sum of cyclic modules over that PID, one obtains that the linear function $v\mapsto Av$ can be represented, with respect to a suitable $K$-basis of $K^n$, by a block diagonal matrix each block of which is the companion matrix of a suitable monic polynomial (of degree at most $n$) over $K$. There is a unique (up to reordering of the blocks) such matrix in which the number of diagonal blocks is minimal; this is called the \emph{rational canonical form of $A$} (for more details on the theory of rational canonical forms, see {\cite[Chapter 12]{Dummit}}).

On the other hand, if the monic polynomial $P\in K[X]$ admits the factorization $P=Q_1^{e_1}Q_2^{e_2}\cdots Q_r^{e_r}$ into powers of monic irreducible polynomials $Q_i\in K[X]$, then the Chinese Remainder Theorem for PIDs can be used to show that the matrix $\textnormal{Comp}(P)$ is similar to the block diagonal matrix with blocks $\textnormal{Comp}(Q_i^{e_i})$ for $i=1,2,\ldots,r$. Hence, any $(n\times n)$-matrix $A$ over $K$ also has a block diagonal form where each diagonal block is the companion matrix of a power of a monic irreducible polynomial over $K$. This latter block diagonal form is also unique up to reordering of its blocks and is called the \emph{primary rational canonical form of $A$}. Henceforth, we will work with primary rational canonical forms of matrices.

We wish to understand, for a fixed automorphism $\alpha$ of $V$, the different possibilities for the cycle types of affine permutations of $V$ of the form $\lambda(\alpha,v): x\mapsto x^{\alpha}+v$, where $v$ ranges over $V$. It turns out that these possibilities are in bijection with the possible $\alpha$-weights of the vectors $v$ in the sense of the following notation; see Proposition \ref{prop 2.15} below. 

\begin{notation}\label{notation 2.14}
    Let $q = p^f$ be a prime power, let $V$ be a finite-dimensional $\mathbb{F}_q$-vector space, and let $ \alpha $ be an $\mathbb{F}_q$-automorphism of $V$. Assume that the primary rational canonical blocks of $ \alpha $ are $ \textnormal{Comp}(Q_i^{e_i}) $ for $ i = 1,2,\ldots,s $, listed with multiplicities, and let $ V = \bigoplus_{i=1}^{s}V_i $ be a direct decomposition of $V$ into corresponding block subspaces.
For $i = 1,2,\ldots,s $, we define a subspace $W_i$ of $V_i$ as follows. If $Q_i\not=X-1$, then we simply set $W_i:=V_i$. On the other hand, if $Q_i=X-1$, then $W_i:=\im(\alpha_i-\id)$, the image of the linear function $\alpha_i-\id$, is the subspace consisting of the so-called \emph{$\alpha_i$-non-units} (an element of $V_i$ that does not lie in $W_i$ is called an \emph{$\alpha_i$-unit}). For $ v \in V $, denote by $v_i$ for $ i = 1,2,\ldots s $ the projection of $v$ to $V_i$. The $\alpha$-weight of $v$, denoted by $\textnormal{wt}_{\alpha}(v)$, is defined as follows:
\begin{align*}
    \textnormal{wt}_{\alpha}(v):= \textnormal{max}(\{0\}\cup\{1 + \lfloor \log_p(e_i) \rfloor :
1\leq i \leq s, v_i \not\in W_i\}).
\end{align*}    
\end{notation}

Of course, for those $i$ for which $Q_i\not=X-1$, it is impossible to have $v_i\notin W_i$, so only the indices $i$ for which $Q_i=X-1$ are relevant for the computation of the $\alpha$-weight of $v$. In particular, if there are no $i$ such that $Q_i=X-1$, then all $v\in V$ have $\alpha$-weight $0$.

Before we can prove Proposition \ref{prop 2.15}, we need some more preparations. Let us use the direct decomposition $V=\oplus_{i=1}^s{V_i}$ from Notation \ref{notation 2.14}, and denote by $\alpha_i$, respectively $v_i$, the restriction, respectively projection, of $\alpha$, respectively $v$, to $V_i$. Then we can view $\lambda(\alpha,v)$ as the component-wise application of the affine permutations $\lambda(\alpha_i,v_i)$ of the block subspaces $V_i$. As such, we have
\begin{align*}
        \textnormal{CT}(\lambda(\alpha,v)) =  \divideontimes_{i=1}^{s} \textnormal{CT}(\lambda(\alpha_i,v_i))
\end{align*}
where $\divideontimes$ is the $\mathbb{Q}$-bilinear product of polynomials introduced by Wei and Xu in \cite[Definition 2.2]{Wei-xu}. Specifically,
\[
\prod_i{x_i^{e_i}}\divideontimes\prod_j{x_j^{\epsilon_j}}
=
\prod_{i,j}{x_{\lcm(i,j)}^{e_i\epsilon_j\gcd(i,j)}}.
\]
For example,
\begin{align*}
(x_2^2x_3)\divideontimes(x_3^2x_4)
&=x_{\lcm(2,3)}^{2\cdot2\cdot\gcd(2,3)}\cdot x_{\lcm(2,4)}^{2\cdot 1\cdot\gcd(2,4)}\cdot x_{\lcm(3,3)}^{1\cdot 2\cdot\gcd(3,3)}\cdot x_{\lcm(3,4)}^{1\cdot 1\cdot\gcd(3,4)}
=x_6^4\cdot x_4^4\cdot x_3^6\cdot x_{12} \\
&=x_3^6x_4^4x_6^4x_{12}.
\end{align*}

The following result is sometimes useful when trying to simplify a Wei-Xu product, and we will use it in the proof of Proposition \ref{prop 2.15} below. Henceforth, we use the notation $\IN^+$ to denote the set of positive integers.

\begin{lemma}\label{absorptionLem}
Let
\[
\gamma=\prod_{\ell\in\mathbb{N}^+}{x_{\ell}^{k_{\ell}}}
\]
be the cycle type of a permutation on a finite number $m$ of points, and let
\[
\delta=\prod_{\ell\in\mathbb{N}^+}{x_{\ell}^{n_{\ell}}}
\]
be another cycle type of a finite permutation, not necessarily on $m$ points. Assume that
\begin{equation}\label{absorptionEq}
\operatorname{lcm}\{\ell\in\mathbb{N}^+: k_{\ell}\not=0\}\text{ divides }\operatorname{gcd}\{\ell\in\mathbb{N}^+: n_{\ell}\not=0\}.
\end{equation}
Then
\[
\gamma\divideontimes\delta=\delta^m.
\]
\end{lemma}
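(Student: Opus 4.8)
The plan is to compute the Wei--Xu product $\gamma\divideontimes\delta$ directly from its defining formula and show that, under the divisibility hypothesis \eqref{absorptionEq}, every $\lcm$ collapses to the second index and every $\gcd$ collapses to the first index, so that the product telescopes into $\delta^m$. Let me write $L:=\operatorname{lcm}\{\ell:k_\ell\neq0\}$ and $D:=\operatorname{gcd}\{\ell:n_\ell\neq0\}$, so the hypothesis is $L\mid D$. The key structural observation is this: whenever $k_i\neq0$ and $n_j\neq0$, we have $i\mid L$ and $D\mid j$, hence $i\mid L\mid D\mid j$, so $i\mid j$. Consequently $\lcm(i,j)=j$ and $\gcd(i,j)=i$ for every pair $(i,j)$ of indices that actually contributes to the product.

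From here I would substitute these simplifications into the defining formula. Starting from
\[
\gamma\divideontimes\delta
=\prod_{i,j}x_{\lcm(i,j)}^{k_i n_j\gcd(i,j)},
\]
the observation above turns this into
\[
\prod_{i,j}x_{j}^{k_i n_j\, i}
=\prod_{j}x_j^{\,n_j\sum_i i\,k_i}.
\]
Now the inner sum $\sum_i i\,k_i$ is exactly the number of points moved (counted with multiplicity of cycle length) by a permutation with cycle type $\gamma$, which is $m$ since $\gamma$ is the cycle type of a permutation on $m$ points. Therefore the exponent becomes $m\,n_j$, and
\[
\prod_j x_j^{\,m\,n_j}
=\Bigl(\prod_j x_j^{\,n_j}\Bigr)^{m}
=\delta^m,
\]
which is the claimed identity.

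The steps, in order, are: (i) record the hypothesis in the form $L\mid D$ and derive $i\mid j$ for all contributing index pairs; (ii) substitute $\lcm(i,j)=j$ and $\gcd(i,j)=i$ into the product formula; (iii) factor out the sum over $i$ and identify $\sum_i i\,k_i=m$ using that $\gamma$ is a cycle type on $m$ points; (iv) recognize the result as $\delta^m$. There is no genuine obstacle here: the only point requiring a moment's care is the bookkeeping in step (i), namely that the divisibility chain $i\mid L\mid D\mid j$ is valid only for those indices $i,j$ with nonzero exponents, which is precisely why the product formula over \emph{all} $i,j$ causes no harm---any pair with $k_i=0$ or $n_j=0$ contributes a trivial factor $x_{\lcm(i,j)}^0=1$ and may be ignored. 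I would simply restrict attention throughout to the support indices.
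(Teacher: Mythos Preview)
Your proposal is correct and follows essentially the same approach as the paper's proof: restrict to the support indices, use the divisibility hypothesis to replace $\lcm(i,j)$ by $j$ and $\gcd(i,j)$ by $i$, then collapse the sum $\sum_i i\,k_i$ to $m$. The paper does this in a single displayed chain of equalities without naming $L$ and $D$ explicitly, but the argument is identical.
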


Formulated in words, condition (\ref{absorptionEq}) says that all cycle lengths of a permutation with cycle type $\gamma$ shall divide all cycle lengths of a permutation with cycle type $\delta$. If this happens, we say that \emph{$\delta$ absorbs $\gamma$}.

\begin{proof}[Proof of Lemma \ref{absorptionLem}]
Set
\[
\mathcal{L}_1:=\{\ell\in\mathbb{N}^+: k_{\ell}\not=0\}\text{ and }\mathcal{L}_2:=\{\ell\in\mathbb{N}^+: n_{\ell}\not=0\}.
\]
By the definition of $\divideontimes$ and assumption (\ref{absorptionEq}), we have
\begin{align*}
\gamma\divideontimes\delta &=(\prod_{\ell_1\in\mathcal{L}_1}{x_{\ell_1}^{k_{\ell_1}}})\divideontimes(\prod_{\ell_2\in\mathcal{L}_2}{x_{\ell_2}^{n_{\ell_2}}})=\prod_{\ell_1\in\mathcal{L}_1,\ell_2\in\mathcal{L}_2}{x_{\operatorname{lcm}(\ell_1,\ell_2)}^{\gcd(\ell_1,\ell_2)k_{\ell_1}n_{\ell_2}}}=\prod_{\ell_1\in\mathcal{L}_1,\ell_2\in\mathcal{L}_2}{x_{\ell_2}^{\ell_1k_{\ell_1}n_{\ell_2}}} \\
&=\prod_{\ell_2\in\mathcal{L}_2}{x_{\ell_2}^{mn_{\ell_2}}}=\delta^m,
\end{align*}
as required.
\end{proof}

A different way to see that Lemma \ref{absorptionLem} holds is by observing that if $\sigma\in\operatorname{Sym}(\Omega)$ is a permutation of cycle type $\gamma$ and $\psi\in\operatorname{Sym}(\Lambda)$ is a permutation of cycle type $\delta$, and if we consider the component-wise permutation $\sigma\times\psi\in\operatorname{Sym}(\Omega\times\Lambda)$, which is of cycle type $\textnormal{CT}(\sigma)\divideontimes\textnormal{CT}(\psi)$, then the cycle length of a point $(\omega,\lambda)\in\Omega\times\Lambda$ under $\sigma\times\psi$ is the least common multiple of the cycle lengths of $\omega$ under $\sigma$ and $\lambda$ under $\psi$ respectively, which equals the latter by condition (\ref{absorptionEq}). In other words, the first component $\omega\in\Omega$ has no influence on the cycle length of the pair $(\omega,\lambda)$.

At last, we are now ready to understand how weights relate to cycle types.

\begin{proposition}\label{prop 2.15}
Under the assumptions of Notation~\ref{notation 2.14}, the following hold: 
\begin{enumerate}
    \item If $v, w \in V$ have the same $ \alpha $-weight, then $\textnormal{CT}(\lambda(\alpha,v)) = \textnormal{CT}(\lambda(\alpha,w))$.
    \item For all $v \in V$ , the shortest cycle length of $\lambda(\alpha,v)$ that is a power of $p$ is $ p^{\textnormal{wt}_{\alpha}(v)} $. In particular, if $v, w \in V$ have different $ \alpha $-weights, then $\textnormal{CT}(\lambda(\alpha,v)) \neq \textnormal{CT}(\lambda(\alpha, w)) $.
\end{enumerate}
In particular, the number of distinct cycle types of affine permutations of $ V $ of the form $\lambda(\alpha,v)$ with $v$ ranging over $V$ is $|\textnormal{im}(\textnormal{wt}_{\alpha})|$, the number of distinct values of the function $\wt_{\alpha}:V\rightarrow\IN_0$, where $\IN_0$ denotes the set of nonnegative integers.
\end{proposition}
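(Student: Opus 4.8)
The plan is to reduce everything to the block decomposition
$\CT(\lambda(\alpha,v)) = \divideontimes_{i=1}^{s} \CT(\lambda(\alpha_i,v_i))$
recorded just before the statement, and to compute each factor via Proposition~\ref{prop 2.12}. Identifying $V_i$ with $\IF_q[X]/(Q_i^{e_i})$ so that $\alpha_i$ is multiplication by $X$, the block map $\lambda(\alpha_i,v_i)$ becomes the map $\lambda(X,U_i)$ of that proposition, where $U_i$ is the image of $v_i$. The crucial feature to exploit is that each block factor is insensitive to $v_i$ beyond a single bit: for $Q_i\neq X-1$ it does not depend on $v_i$ at all, while for $Q_i=X-1$ it depends only on whether $v_i$ is a unit ($v_i\notin W_i$) or a non-unit, and $W_i=\im(\alpha_i-\id)$ is exactly the set of non-units, matching Notation~\ref{notation 2.14}.

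I would prove statement (2) first, as it drives everything. Since $\lambda(\alpha,v)$ is the componentwise product of the $\lambda(\alpha_i,v_i)$, each of its cycle lengths is an $\lcm$ of block cycle lengths, and such an $\lcm$ is a power of $p$ exactly when every block length entering it is a power of $p$. Hence the shortest power-of-$p$ cycle length of $\lambda(\alpha,v)$ is the $\lcm$, i.e.\ the maximum, of the shortest power-of-$p$ cycle lengths occurring in the individual blocks. Reading these off Proposition~\ref{prop 2.12}: a block with $Q_i\neq X-1$ contributes only $p^0=1$ (every other length carries the factor $\ord(Q_i)>1$, which is coprime to $p$); a block with $Q_i=X-1$ and $v_i$ a non-unit has fixed points, again contributing $p^0$; and a block with $Q_i=X-1$ and $v_i$ a unit has all cycles of length $p^{1+\lfloor\log_p(e_i)\rfloor}$. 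The maximum of these exponents is precisely $\wt_\alpha(v)$, giving the first assertion of (2); the ``in particular'' is then immediate, since the shortest power-of-$p$ cycle length is an invariant of the cycle type.

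For statement (1) I would split the Wei--Xu product into a part manifestly independent of the choice of weight-$W$ vector and a variable part that collapses via Lemma~\ref{absorptionLem}. Write $W:=\wt_\alpha(v)$ and split the $Q_i=X-1$ blocks by whether $1+\lfloor\log_p(e_i)\rfloor\leq W$ or $>W$. Every block with $Q_i\neq X-1$, and every block with $Q_i=X-1$ and $1+\lfloor\log_p(e_i)\rfloor>W$, must be a non-unit block for any weight-$W$ vector (the former automatically, the latter because a unit there would force weight $>W$), so together they give a factor $\Gamma_{\mathrm{fixed}}$ depending only on $\alpha$ and $W$. The remaining factor $\Gamma_{\mathrm{var}}$ is the $\divideontimes$-product over the blocks with $Q_i=X-1$ and $1+\lfloor\log_p(e_i)\rfloor\leq W$; here the unit/non-unit pattern varies with $v$, but since $W\geq1$ at least one such block $i_0$ is a unit with $1+\lfloor\log_p(e_{i_0})\rfloor=W$, contributing $\CT(\lambda(\alpha_{i_0},v_{i_0}))=x_{p^W}^{c}$ (all cycles of length $p^W$). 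This factor absorbs every other block of $\Gamma_{\mathrm{var}}$: a unit block $i$ there has all cycles of length $p^{1+\lfloor\log_p(e_i)\rfloor}$ with exponent $\leq W$, while a non-unit block $i$ there has maximal cycle length $p^{\lceil\log_p(e_i)\rceil}$, and $1+\lfloor\log_p(e_i)\rfloor\leq W$ forces $\log_p(e_i)<W$, hence $\lceil\log_p(e_i)\rceil\leq W$; either way all cycle lengths divide $p^W$. Iterating Lemma~\ref{absorptionLem} over the non-dominant blocks collapses $\Gamma_{\mathrm{var}}$ to a power of $x_{p^W}$ with exponent $\bigl(\prod_{i:\,1+\lfloor\log_p(e_i)\rfloor\leq W}|V_i|\bigr)/p^W$, which depends only on $\alpha$ and $W$. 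Thus $\CT(\lambda(\alpha,v))=\Gamma_{\mathrm{fixed}}\divideontimes\Gamma_{\mathrm{var}}$ depends only on $W$, proving (1); the case $W=0$ is the degenerate one where $\Gamma_{\mathrm{var}}$ is empty and the whole cycle type is fixed.

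I expect the main obstacle to be exactly this absorption step in (1): one must see that among the blocks allowed to be units for a weight-$W$ vector, a maximal unit block swamps all the others, and in particular that the floor/ceiling bookkeeping $1+\lfloor\log_p(e_i)\rfloor\leq W \Rightarrow \lceil\log_p(e_i)\rceil\leq W$ guarantees even the non-unit blocks inside $\Gamma_{\mathrm{var}}$ are absorbed, while the genuinely $v$-independent contributions (the $Q_i\neq X-1$ blocks and the over-threshold blocks) are quarantined into $\Gamma_{\mathrm{fixed}}$. The concluding count then follows formally: statement (1) makes $\text{weight}\mapsto\text{cycle type}$ a well-defined map on $\im(\wt_\alpha)$, and statement (2) makes it injective, since the cycle type recovers $p^{\wt_\alpha(v)}$; hence it is a bijection and the number of distinct cycle types equals $|\im(\wt_\alpha)|$.
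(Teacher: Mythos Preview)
Your proposal is correct and follows essentially the same approach as the paper: both arguments decompose $\CT(\lambda(\alpha,v))$ as a Wei--Xu product over primary blocks, invoke Proposition~\ref{prop 2.12} to see that only the unipotent blocks with $1+\lfloor\log_p(e_i)\rfloor\leq W$ can vary, and then use the absorption Lemma~\ref{absorptionLem} with a maximal unit block (contributing $x_{p^W}^{\,c}$) to collapse that variable part; the proof of (2) via the lcm/maximum of the shortest block $p$-power lengths is likewise the same. The only cosmetic differences are that you prove (2) before (1) and you package the invariant contributions explicitly as $\Gamma_{\mathrm{fixed}}$, whereas the paper leaves this implicit.
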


\begin{proof}
    Let $ \mathfrak{m} \in \{0,1,\ldots,\lfloor \log_p(\textnormal{dim}_{\mathbb{F}_q}(V))\rfloor \}$. As observed above, for a given vector $ v \in V $, we have
    \begin{align*}
        \textnormal{CT}(\lambda(\alpha,v)) =  \divideontimes_{i=1}^{s} \textnormal{CT}(\lambda(\alpha_i,v_i)).
    \end{align*}
    For the proof of statement (1), observe that by Proposition~\ref{prop 2.12}, the variation in the possible cycle types of $\lambda(\alpha,v)$ with $v \in V$ comes exclusively from the unipotent block subspaces of $\alpha$ in the chosen direct decomposition $ V = \bigoplus_{i=0}^{s}V_i $, i.e., from those block subspaces where $Q_i=X-1$. Moreover, by definition of the weight function $\textnormal{wt}_{\alpha}$, if we only consider vectors $v$ such that $ \textnormal{wt}_{\alpha}(v) = \mathfrak{m} $, then only the cycle types $\textnormal{CT}(\lambda(\alpha_i,v_i))$ stemming from blocks of the form $ \textnormal{Comp}((X-1)^{e_i}) $ with $ 1+\lfloor \log_p(e_i) \rfloor \leq \mathfrak{m} $ can be varied. If $ \mathfrak{m} = 0 $, then no variation is possible, and the statement is clear. If $ \mathfrak{m} > 0 $ then at least one of the blocks of $ \alpha $ of the form $\textnormal{Comp}((X-1)^{e_i})$ with $ 1+\lfloor \log_p(e_i) \rfloor = \mathfrak{m} $ yields the cycle type $ x_{p^{1+\lfloor \log_{p}(e_i)\rfloor}}^{q^{e_i}/p^{1+\lfloor \log_{p}(e_i)\rfloor}} $, and another look at Proposition~\ref{prop 2.12} shows that this cycle type absorbs all cycle types $\lambda(\alpha_j,v_j)$ coming from blocks $\textnormal{Comp}((X-1)^{e_j})$ with $ 1+\lfloor \log_p(e_j) \rfloor \leq \mathfrak{m} $, regardless of whether or not $v_j$ is an $\alpha_j$-unit. Therefore, despite possible variation at the individual block level, the overall cycle type of $\lambda(\alpha,v)$ is the same for all $v \in V$ with $ \textnormal{wt}_{\alpha}(v) = \mathfrak{m} $.

    In order to prove statement (2), we note that for each $v \in V$, by Proposition~\ref{prop 2.12}(1), all nontrivial cycle lengths in the cycle type $\lambda(\alpha_i,v_i)$ corresponding a non-unipotent block $\textnormal{Comp}(Q_i^{e_i})$ of $\alpha$ are divisible by a prime distinct from $p$. Since the cycle length of a point $u \in V$ under $\lambda(\alpha,v)$ is the least common multiple of the various cycle lengths of the projections $u_i \in V_i$ under $\lambda(\alpha_i,v_i)$, it follows that if the cycle length of $u$ under $V$ is a power of $p$, then the cycle length of $u_i$ under $\lambda(\alpha_i,v_i)$ must be 1 if $ Q_i \neq X-1 $. On the other hand, for unipotent blocks $\textnormal{Comp}((X-1)^{e_i})$, all cycle lengths of $\lambda(\alpha_i,v_i)$ are (possibly trivial, i.e., equal to $1$) powers of $p$ by Proposition~\ref{prop 2.12}(2,3), and we obtain the shortest cycle length of $\lambda(\alpha,v)$ that is a power of $p$ as the least common multiple of the shortest cycle lengths of the affine permutations $\lambda(\alpha_i,v_i)$ for those $i$ where $Q_i = X-1$. But the cycle types for those unipotent blocks where $v_i$ is an $\alpha_i$-non-unit have fixed points, whence the said least common multiple is 1 if there are no unipotent blocks where $v_i$ is an $\alpha_i$-unit. Otherwise, the said least common multiple is the largest cycle length that occurs in the cycle type of a unipotent block where $v_i$ is an $\alpha_i$-unit. In both cases, this least common multiple is represented by the expression $ p^{\textnormal{wt}_{\alpha}(v)} $, as required.
\end{proof}

We conclude this section with three lemmas, which together can be used to compute the rational canonical forms of powers of invertible matrices (equivalently, of iterates of vector space automorphisms). Of course, it suffices to do this for automorphisms that are given by the companion matrix of a single power of an irreducible polynomial; such automorphisms are called \emph{primary}. We will need the following notations.

\begin{notation}\label{exponentNot}
Let $q$ be a prime power, let $Q\in\mathbb{F}_q[X]$ be monic and irreducible, and let $\ell$ be an integer.
\begin{enumerate}
\item Let $\xi\in\mathbb{F}_{q^{\deg{Q}}}$ be one of the roots of $Q$. The minimal polynomial over $\mathbb{F}_q$ of the power $\xi^{\ell}$ is independent of the choice of $\xi$, and we denote this minimal polynomial by $\operatorname{pow}_{\ell}(Q)$.
\item We set $\operatorname{ind}_{\ell}(Q):=\frac{\operatorname{deg}{Q}}{\operatorname{deg}{\operatorname{pow}_{\ell}(Q)}}$.
\end{enumerate}
\end{notation}

Observe that for a given monic irreducible polynomial $Q\in\mathbb{F}_q[X]$ and integers $\ell_1,\ell_2$, one has
\[
\operatorname{pow}_{\ell_1}(\operatorname{pow}_{\ell_2}(Q))=\operatorname{pow}_{\ell_1\ell_2}(Q).
\]
If $\xi$ is a root of the monic irreducible polynomial $Q\in\mathbb{F}_q[X]$, then $\operatorname{ind}_{\ell}(Q)$ is equal to the field extension degree $[\mathbb{F}_q(\xi):\mathbb{F}_q(\xi^{\ell})]$.

\begin{lemma}\label{lem 2.16}
    Let $q = p^f$ be a prime power, and let $ \alpha $ be a primary $\mathbb{F}_q$-automorphism of a finite-dimensional $\mathbb{F}_q$ vector space $V$, with minimal polynomial $Q^e$ where $Q \in
\mathbb{F}_q[X]$ is monic and irreducible, and $ e \in \mathbb{N}^+ $. Moreover, let $ \ell $ be an integer that is coprime to $\textnormal{ord}(\alpha) = \textnormal{ord}(Q)\cdot p^{\lceil \log_p(e) \rceil}$. Then $ \alpha^{\ell} $ is also a primary $\mathbb{F}_q$-automorphism of $V$, and its minimal polynomial is $ \textnormal{pow}_{\ell}(Q)^e $.

\end{lemma}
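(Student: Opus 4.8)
The plan is to pass to the cyclic-module model for $\alpha$ and reduce everything to a divisibility computation in $\IF_q[X]$. Since $\alpha$ is primary with minimal polynomial $Q^e$, its primary rational canonical form is the single block $\Comp(Q^e)$, so I may identify $V$ with $\IF_q[X]/(Q^e)$ in such a way that $\alpha$ becomes multiplication by $X$, and hence $\alpha^\ell$ becomes multiplication by $X^\ell$ (after reducing $\ell$ modulo $\ord(\alpha)$ I may assume $\ell\ge 1$, since this changes neither $\alpha^\ell$ nor $\pow_\ell(Q)$). In this model $\dim_{\IF_q}V=e\deg Q$, and the minimal polynomial of $\alpha^\ell$ as an operator equals the minimal polynomial of the algebra element $X^\ell+(Q^e)$. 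So the statement reduces to two things: computing that minimal polynomial to be $\pow_\ell(Q)^e$, and a dimension count to upgrade this to primariness. Before starting I extract from the hypothesis the two coprimality facts I will need: since $\ord(Q)\mid\ord(\alpha)$ I get $\gcd(\ell,\ord(Q))=1$, whence $\xi^\ell$ generates the same multiplicative group as a root $\xi$ of $Q$, so $\IF_q(\xi^\ell)=\IF_q(\xi)$, i.e.\ $\operatorname{ind}_\ell(Q)=1$ and $\deg\pow_\ell(Q)=\deg Q$; and, when $e\ge 2$, the factor $p^{\lceil\log_p e\rceil}$ of $\ord(\alpha)$ is divisible by $p$, forcing $p\nmid\ell$.

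Write $R:=\pow_\ell(Q)$. The core of the argument is to pin down the exact power of $Q$ dividing $R(X^\ell)$ in $\IF_q[X]$. First, $R(\xi^\ell)=0$ because $\xi^\ell$ is a root of its minimal polynomial $R$, so $\xi$ is a root of the $\IF_q$-polynomial $R(X^\ell)$, and hence $Q$, being the minimal polynomial of $\xi$ over $\IF_q$, divides $R(X^\ell)$. Next I claim $Q^2\nmid R(X^\ell)$ when $e\ge 2$: differentiating, $\tfrac{d}{dX}R(X^\ell)=\ell X^{\ell-1}R'(X^\ell)$, and at $X=\xi$ the three factors $\ell$, $\xi^{\ell-1}$, $R'(\xi^\ell)$ are all nonzero (using $p\nmid\ell$; that $Q\ne X$, so $\xi\ne0$; and that $R$ is separable over the perfect field $\IF_q$, so $R'(\xi^\ell)\ne0$). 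Hence $\xi$ is a simple root of $R(X^\ell)$, so $Q$ divides $R(X^\ell)$ exactly once, and we may write $R(X^\ell)=Q\cdot S$ with $Q\nmid S$, so that $S$ is a unit modulo $Q^e$.

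With this in hand the minimal polynomial falls out. For $e\ge 2$ I have $R(X^\ell)^k=Q^kS^k$ with $S^k$ a unit, so $R(X^\ell)^k\equiv 0\Mod{Q^e}$ exactly when $k\ge e$; hence $R^{\,e}$ annihilates $\alpha^\ell$ while $R^{\,e-1}$ does not, and as $R$ is irreducible the minimal polynomial of $\alpha^\ell$ must be $R^e$. (For $e=1$ this is immediate: there $V=\IF_q[X]/(Q)\cong\IF_{q^{\deg Q}}$, $\alpha^\ell$ is multiplication by $\xi^\ell$, and $R=\pow_\ell(Q)$ is by definition the minimal polynomial of $\xi^\ell$.) Finally, $\deg R^e=e\deg R=e\deg Q=\dim_{\IF_q}V$ by the index computation above; since the minimal polynomial always divides the characteristic polynomial and both are monic of degree $\dim_{\IF_q}V$, they coincide, so $\alpha^\ell$ is cyclic, its primary rational canonical form is the single block $\Comp(R^e)$, and $\alpha^\ell$ is primary with minimal polynomial $\pow_\ell(Q)^e$, as claimed.

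The main obstacle is the exact-multiplicity step $Q^2\nmid R(X^\ell)$: this is precisely where the hypothesis $\gcd(\ell,\ord(\alpha))=1$ does real work, via the $p^{\lceil\log_p e\rceil}$ factor which guarantees $p\nmid\ell$ when $e\ge2$ and so keeps $\xi$ a simple root; without it the unipotent part could collapse the block size and $\alpha^\ell$ would fail to be primary. A conceptually parallel route, which I would keep in reserve, is to base-change to the splitting field $\IF_{q^{\deg Q}}$, where $\alpha$ is a direct sum of Jordan blocks $J_e(\xi^{q^i})$; the power $J_e(\lambda)^\ell$ of a single Jordan block with $\lambda\ne0$ and $p\nmid\ell$ is again a single block $J_e(\lambda^\ell)$, the eigenvalues $(\xi^\ell)^{q^i}$ are the $\deg Q$ distinct conjugates of $\xi^\ell$, and similarity over the extension descends to similarity over $\IF_q$, giving the same conclusion.
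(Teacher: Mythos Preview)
Your proof is correct, but the route differs from the paper's in an instructive way. The paper first proves primariness directly: since $\gcd(\ell,\ord(\alpha))=1$, the automorphisms $\alpha$ and $\alpha^\ell$ are powers of each other and therefore have the same invariant subspaces, so a nontrivial $\alpha^\ell$-block decomposition would contradict that $\alpha$ is primary. With primariness in hand, the minimal polynomial $M$ of $\alpha^\ell$ has degree $\dim_{\IF_q}V$, and the paper only needs the easy divisibility $M\mid\pow_\ell(Q)^e$ together with $\deg\pow_\ell(Q)=\deg Q$ to conclude $M=\pow_\ell(Q)^e$. By contrast, you run the argument in the opposite direction: you compute the exact $Q$-adic valuation $\nu_Q\bigl(\pow_\ell(Q)(X^\ell)\bigr)=1$ via the derivative (using $p\nmid\ell$ when $e\ge2$) to pin down $M=\pow_\ell(Q)^e$ first, and only then recover primariness from the degree count. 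Your exact-multiplicity step is precisely the computation the paper postpones to the proof of its next lemma (Lemma~\ref{lem 2.17}); so what you do is trade the paper's one-line invariant-subspace observation for a sharper divisibility statement that the paper needs later anyway. The Jordan-block alternative you sketch at the end is a third, equally valid route.
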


\begin{proof}
Since $ \alpha $ and $ \alpha^{\ell} $ are powers of each other, they have the same invariant subspaces in $V$. Hence $ \alpha^{\ell} $ must be primary, for otherwise, $V$ would admit a nontrivial direct decomposition into $ \alpha $-invariant subspaces, contradicting that $ \alpha $ is primary. As for the minimal polynomial $ M $ of $ \alpha^{\ell} $ on $V$, note that by definition of $ \textnormal{pow}_{\ell}(Q) $, we have
\[
\textnormal{pow}_{\ell}(Q)(X^{\ell}) \equiv 0 \pmod{Q},
\]
which implies
\[
\textnormal{pow}_{\ell}(Q)^e(X^{\ell}) \equiv 0 \pmod{Q^e},
\]
so say $ \textnormal{pow}_{\ell}(Q)^e(X^{\ell}) = R\cdot Q^e $. Since $Q^e(\alpha) = 0_{\textnormal{End}(V)} $, it follows that 
\begin{align*}
    \textnormal{pow}_{\ell}(Q)^e(\alpha^{\ell}) = R(\alpha)Q^e(\alpha) = R(\alpha)0_{\textnormal{End}(V)} = 0_{\textnormal{End}(V)},
\end{align*}
whence $ M $ divides $ \textnormal{pow}_{\ell}(Q)^e $. But
\begin{align*}
    \textnormal{deg}~~M = \textnormal{dim}_{\mathbb{F}_q}(V) = e\cdot\textnormal{deg}~~Q = e\cdot\textnormal{deg}(\textnormal{pow}_{\ell}(Q)) = \textnormal{deg}(\textnormal{pow}_{\ell}(Q)^e),
\end{align*}
so that $  M = \textnormal{pow}_{\ell}(Q)^e $ as required.
\end{proof}

\begin{lemma}\label{lem 2.17}
Let $q = p^f$ be a prime power, and let $\alpha$ be a primary $\mathbb{F}_q$-automorphism of a finite-dimensional $\mathbb{F}_q$-vector space $V$, with minimal polynomial $Q^e$ where $Q \in
\mathbb{F}_q[X]$ is monic and irreducible, and $e \in \mathbb{N}^+$. Moreover, let $ \ell $ be a positive integer that
divides $\textnormal{ord}(\alpha)_{p'}$ (the product of all prime power factors in the factorization of $\textnormal{ord}(\alpha)$ except $p^{\nu_p(\textnormal{ord}(\alpha))}$). Then for each fixed $ \alpha $-unit $u \in V$, the $ \alpha^{\ell} $-cyclic subspaces $ U_i:= \mathbb{F}_qu^{\alpha^i\langle \alpha^{\ell}\rangle} $ of $ V $ for $ i = 0,1,\ldots,\textnormal{ind}_{\ell}(Q)-1 $ form an $ \alpha^{\ell} $-block subspace decomposition
of $V$, and $ (\alpha^{\ell})_{|U_i}$
has the minimal polynomial $ \textnormal{pow}_{\ell}(Q)^{e} $, independent of $ i $.
\end{lemma}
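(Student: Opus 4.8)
The plan is to realize the abstract situation concretely and then reduce both assertions---the value of the minimal polynomial and the direct-sum decomposition---to elementary facts about the substitution $X\mapsto X^{\ell}$. First I would fix the standard model $R:=\IF_q[X]/(Q^e)$ of the $\IF_q[\alpha]$-module $V$, under which $\alpha$ becomes multiplication by $X$ and the hypothesis that $u$ is an $\alpha$-unit becomes the statement that $u$ is a unit of the local ring $R$ (its residue modulo $Q$ is nonzero). Writing $\beta:=\alpha^{\ell}$, which is multiplication by $X^{\ell}$, and letting $S\subseteq R$ be the $\IF_q$-subalgebra generated by $\beta$, each subspace of the statement is $U_i=SX^i u$; in particular each $U_i$ is $\beta$-invariant and $\beta$-cyclic. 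I would record at the outset that $\ell$ is coprime to $p$: this is exactly where the hypothesis $\ell\mid\ord(\alpha)_{p'}=\ord(Q)$ enters, since $\ord(Q)$ divides $q^{\deg Q}-1$ and is therefore prime to $p$. This coprimality makes $X^{\ell}-c$ separable for every $c\neq 0$, which drives everything below.

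For the minimal polynomial, I would first note that $\alpha^i$ conjugates $\beta|_{U_0}$ to $\beta|_{U_i}$, so it suffices to treat $U_0=Su$. Since $u$ is a unit, the minimal polynomial of $\beta|_{U_0}$ is the least-degree monic $M$ with $Q^e\mid M(X^{\ell})$. To identify it, fix a root $\xi$ of $Q$ (nonzero, as $\alpha$ is invertible) and set $\zeta:=\xi^{\ell}$, whose minimal polynomial over $\IF_q$ is $\pow_{\ell}(Q)$ by definition. The key calculation is that, because $\gcd(\ell,p)=1$, the multiplicity of $\xi$ as a root of $M(X^{\ell})$ equals the multiplicity of $\zeta$ as a root of $M$: writing $M(Y)=(Y-\zeta)^m N(Y)$ with $N(\zeta)\neq 0$ gives $M(X^{\ell})=(X^{\ell}-\zeta)^m N(X^{\ell})$, where $\xi$ is a simple root of $X^{\ell}-\zeta$ and $N(X^{\ell})$ does not vanish at $\xi$. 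Running this over all roots of $Q$ (equivalently, over all roots of $\pow_{\ell}(Q)$, which are precisely their $\ell$-th powers), the divisibility $Q^e\mid M(X^{\ell})$ becomes $\pow_{\ell}(Q)^e\mid M$, so the minimal polynomial is $\pow_{\ell}(Q)^e$, as claimed.

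It remains to show $V=\bigoplus_{i=0}^{\operatorname{ind}_{\ell}(Q)-1}U_i$. Applying the bijective multiplication by $u$, this is equivalent to $R=\sum_{i=0}^{d-1}SX^i$ with $d:=\operatorname{ind}_{\ell}(Q)$. Here I would invoke Nakayama's lemma for the local ring $S$, whose maximal ideal $\mathfrak{m}_S$ is generated by $\pow_{\ell}(Q)(X^{\ell})$: the classes of $1,X,\dots,X^{d-1}$ generate $R$ over $S$ as soon as their images generate $R/\mathfrak{m}_S R$ over the residue field $S/\mathfrak{m}_S=\IF_q(\zeta)$. The computation of this quotient is the crux: since $\xi$ is a simple root of $\pow_{\ell}(Q)(X^{\ell})$ by the separability above, one has $\gcd(Q^e,\pow_{\ell}(Q)(X^{\ell}))=Q$, so $R/\mathfrak{m}_S R\cong\IF_q[X]/(Q)=\IF_q(\xi)$, and the images of $1,X,\dots,X^{d-1}$ become $1,\xi,\dots,\xi^{d-1}$. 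Because $d=\operatorname{ind}_{\ell}(Q)=[\IF_q(\xi):\IF_q(\zeta)]$ and $\IF_q(\xi)=\IF_q(\zeta)(\xi)$, these powers form an $\IF_q(\zeta)$-basis of $\IF_q(\xi)$, so Nakayama gives $R=\sum_{i=0}^{d-1}SX^i$. A dimension count then closes the argument: $\dim_{\IF_q}R=e\deg Q=d\cdot e\deg\pow_{\ell}(Q)=d\dim_{\IF_q}S$, and since $R$ is generated by $d$ elements over $S$ this forces the sum to be direct with each map $S\to SX^i$ injective; translating back, $V=\bigoplus_i U_i$ with each $\beta|_{U_i}$ having minimal polynomial $\pow_{\ell}(Q)^e$.

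The main obstacle I anticipate is the direct-sum step, specifically the clean identification of $R/\mathfrak{m}_S R$ with the residue field extension and the verification that $1,\xi,\dots,\xi^{d-1}$ descend to an $\IF_q(\zeta)$-basis of $\IF_q(\xi)$; the minimal-polynomial computation is, by contrast, a routine separability argument once the model is in place, and the dimension bookkeeping is immediate. I would also be careful to invoke explicitly, where they are used, that irreducible polynomials over finite fields are separable and that $\xi\neq 0$, since these facts underpin all the multiplicity arguments.
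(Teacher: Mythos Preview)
Your argument is correct, but it proceeds quite differently from the paper's proof. The paper works entirely by explicit computation: it sets $X^{\ord(Q)}=1+AQ$ with $Q\nmid A$, rewrites $U_0$ concretely as the set $\{\sum_{i=0}^{e-1}A^iE_iQ^i+(Q^e):E_i\in\mathcal{E}\}$ for a certain subspace $\mathcal{E}\subseteq\IF_q[X]$ (this is where the full divisibility $\ell\mid\ord(Q)$ is used, to make $\ord(Q)/\ell$ an integer), obtains the minimal polynomial by combining the divisibility $\pow_{\ell}(Q)^e(X^{\ell})\equiv 0\pmod{Q^e}$ with a derivative calculation that rules out $\pow_{\ell}(Q)^{e-1}$, and finally proves $V=\sum_i U_i$ by a recursive ``$Q$-adic digit'' argument that adjusts one coefficient at a time. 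Your route is more structural: you extract the minimal polynomial directly from the separability of $X^{\ell}-\zeta$ via a clean multiplicity count, and you replace the explicit spanning argument by Nakayama's lemma over the local ring $S\cong\IF_q[Y]/(\pow_{\ell}(Q)^e)$, reducing to the residue-field statement that $1,\xi,\dots,\xi^{d-1}$ form an $\IF_q(\zeta)$-basis of $\IF_q(\xi)$. Your approach is shorter and in fact only needs $\gcd(\ell,p)=1$ rather than $\ell\mid\ord(Q)$; the paper's approach, on the other hand, yields an explicit $Q$-adic description of the block $U_0$ that could be reused for finer computations.
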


\begin{proof}
    Without loss of generality, we assume that $V = \mathbb{F}_q[X]/(Q^e)$, that $\alpha$ is the multiplication by $X$ modulo $Q^e$, and that $u = 1 + (Q^e)$. Set
    \begin{align*}
        \mathcal{E}:= \sum_{k=0}^{\frac{\textnormal{ord}(Q)}{\ell}-1}\mathbb{F}_{q}X^{k\ell},
    \end{align*}
    which is an $ \mathbb{F}_q $-subspace of $\mathbb{F}_q[X]$. If we view $\mathbb{F}_q[X]/(Q)$ as a copy of the finite field $ \mathbb{F}_{q^{\textnormal{deg}Q}} $, then the image $ \mathcal{E} \pmod{Q} $ of $ \mathcal{E} $ under the canonical projection $ \mathbb{F}_q[X] \rightarrow \mathbb{F}_q[X]/(Q) $ corresponds to the degree $ \textnormal{ind}_{\ell}(Q) $ subfield generated by $X^{\ell} + (Q)$. In particular, the reductions modulo $Q$ of the $\mathbb{F}_q$-subspaces $ X^{i}\mathcal{E} $ of $ \mathbb{F}_q[X] $ for $ i = 0,1,\ldots,\textnormal{ind}_{\ell}(Q)-1 $  form a direct decomposition of $\mathbb{F}_q[X]/(Q)$, because $\{X^i+(Q): i = 0,1,\ldots, \textnormal{ind}_{\ell}(Q)-1\}$ is an ($ \mathcal{E} \pmod{Q} $)-basis of $\mathbb{F}_q[X]/(Q)$.

    For $ i = 0,1,\ldots,\textnormal{ind}_{\ell}(Q)-1 $ denote by $U_i$ the $ \alpha^{\ell} $-cyclic subspace of $V = \mathbb{F}_q[X]/(Q^e)$ generated by $X^i+(Q^e)$. That is,
    \begin{align*}
        U_i = \sum_{m\in\mathbb{N}} \mathbb{F}_q(X^{i+\ell m}+(Q^e)).
    \end{align*}
We observe that $ \nu_{Q}(X^{\textnormal{ord}(Q)}-1) = 1 $. Indeed, if $ Q^2 \divides X^{\textnormal{ord}(Q)}-1 $, then $ Q^2\cdot R = X^{\textnormal{ord}(Q)}-1 $ for some $ R\in \mathbb{F}_q[X] $, so $ 
0 \neq \textnormal{ord}(Q)X^{\textnormal{ord}(Q)-1} = 2QQ'R+Q^2R' = Q(2Q'R+QR') $. This implies that $ Q \divides X^{\textnormal{ord}(Q)-1} $, hence $ Q = X $, a contradiction. Therefore, we can write $ X^{\textnormal{ord}(Q)} = 1+AQ $ for some $ A \in \mathbb{F}_q[X] $ not divisible by $ Q $. We claim that
\begin{align*}
    U_0 &= \mathbb{F}_q\{X^{k\ell}+(Q^e): k=0,1,\ldots,\frac{\textnormal{ord}(Q^e)}{\ell}-1\} \\&= \{\sum_{i=0}^{e-1}A^iE_iQ^i+(Q^e): E_0,E_1,\ldots,E_{e-1}\in \mathcal{E}\} =:U'_{0} \tag{1}\label{eq1}
\end{align*}
The first equality is by the definition of $U_0$, using that
$$ X^{\textnormal{ord}(Q^e)} \equiv 1 \pmod{Q^e}, $$ and thus
\begin{align*}
    \mathbb{F}_q(X^{k\ell+t~\textnormal{ord}(Q^e)}+(Q^e)) = \mathbb{F}_q(X^{k\ell}+(Q^e))
\end{align*}
for all $  k \in \{0,1,\ldots,\frac{\textnormal{ord}(Q^e)}{\ell}-1\}$ and all $ t \in \mathbb{N}_0 $.
To see that the second equality in formula (\ref{eq1}) holds, we consider a general element
\begin{align*}
    u_0:= \sum_{k=0}^{\frac{\textnormal{ord}(Q^e)}{\ell}-1} a_kX^{k\ell}+(Q^e)
\end{align*}
of $ U_0 $ (with $a_k \in \mathbb{F}_q$ for all $k$), and rewrite it as follows, using that $ 
\textnormal{ord}(Q^e) = \textnormal{ord}(Q)p^{\lceil \log_p(e)\rceil} $ by \cite[Chapter 3]{Lidl}: 
\begin{align*}
    u_0 &= \sum_{i=0}^{p^{\lceil \log_p(e)\rceil}-1} \sum_{j=0}^{\frac{\textnormal{ord}(Q)}{\ell}-1} a_{i\frac{\textnormal{ord}(Q)}{\ell}+j}X^{(i\frac{\textnormal{ord}(Q)}{\ell}+j)\ell}+(Q^e) \\&=  \sum_{i=0}^{p^{\lceil \log_p(e)\rceil}-1} X^{i\cdot\textnormal{ord}(Q)}\sum_{j=0}^{\frac{\textnormal{ord}(Q)}{\ell}-1} a_{i\frac{\textnormal{ord}(Q)}{\ell}+j}X^{j\ell}+(Q^e).
\end{align*}
For $ i = 0,1,\ldots,p^{\lceil \log_p(e)\rceil}-1 $, set
\begin{align*}
    E'_i:= \sum_{j=0}^{\frac{\textnormal{ord}(Q)}{\ell}-1}a_{i\frac{\textnormal{ord}(Q)}{\ell}+j}X^{j\ell}.
\end{align*}
    Observe that the $E'_i$ are arbitrary and independent elements of $ \mathcal{E} $. By the above, we have
    \begin{align*}
        u_0 &= \sum_{i=0}^{p^{\lceil\log_p(e)\rceil}-1}X^{i~\textnormal{ord}(Q)}E'_i+(Q^e) = \sum_{i=0}^{p^{\lceil\log_p(e)\rceil}-1} (1+AQ)^{i}E'_i + (Q^e) \\&= \sum_{i=0}^{p^{\lceil\log_p(e)\rceil}-1} \sum_{t=0}^{i} {i\choose t}A^tQ^tE'_i+(Q^e) = \sum_{t=0}^{p^{\lceil\log_p(e)\rceil}-1}A^tQ^t\sum_{i=t}^{p^{\lceil\log_p(e)\rceil}-1}{i\choose t}E'_i+(Q^e).
    \end{align*}
    Hence, setting
    \begin{align*}
        E_t:= \sum_{i=t}^{p^{\lceil\log_p(e)\rceil}-1}{i\choose t}E'_i \in \mathcal{E}, \tag{2} \label{eq2}
    \end{align*}
    we obtain the representation
    \begin{align*}
        u_0 = \sum_{t=0}^{p^{\lceil\log_p(e)\rceil}-1}A^tQ^tE_t+(Q^e) \in U'_0,
    \end{align*}
    which shows that $ U_0 \subseteq U'_0 $. Moreover, observe that by formula (\ref{eq2}), we have
    \begin{align*}
        & E_0 \in E'_0+\sum_{i>0}\mathbb{F}_qE'_i \\& E_1 \in E'_1+\sum_{i>1}\mathbb{F}_qE'_i,\\ &\vdots\\& E_{e-1} \in E'_{e-1}+\sum_{i>e-1}\mathbb{F}_qE'_i.
    \end{align*}
    This shows that in order to get a certain tuple of values in $ \mathcal{E} $ for $E_0, E_1,\ldots, E_{e-1}$, we can start by choosing $ E'_{e-1}, E'_e,\ldots,E'_{p^{\lceil \log_p(e)\rceil}-1} \in \mathcal{E} $ such that $ E_{e-1} $ assumes its desired value, and then successively adjust the values of $ E_{e-2}, E_{e-3},\ldots, E_1, E_0 $ via suitable choices of $ E'_{e-2}, E'_{e-3},\ldots,E'_1,E'_0 $, respectively. Hence $u_0$ can indeed assume arbitrary values in $ U'_0 $, and we conclude that $ U_0 = U'_0 $.

    Next, we claim that the minimal polynomial $M$ of $\alpha^{\ell}$ on $U_0$ is $ \textnormal{pow}_{\ell}(Q)^e $. To see that this holds, observe that as in the proof of Lemma~\ref{lem 2.16}, we have $ \textnormal{pow}_{\ell}(Q)^e(X^{\ell}) \equiv 0 \pmod{Q^e} $, which shows that $ \textnormal{pow}_{\ell}(Q)^e $ annihilates $ \alpha^{\ell} $ on all of $ \mathbb{F}_q[X]/(Q^e) $, in particular on $ U_0 $. This shows that $M$ divides $ \textnormal{pow}_{\ell}(Q)^e $, and thus $ M =  \textnormal{pow}_{\ell}(Q)^k $ for some $ k \in \{1,2,\ldots,e\} $.

    In order to show that $k = e$, we need to verify that $ \textnormal{pow}_{\ell}(Q)^{e-1} $ does not annihilate $ \alpha^{\ell} $ on~$ U_0 $. Assume otherwise. Since $1 + (Q^e) \in U_0$, it follows that 
    \begin{align*}
        (Q^e) = 0_{U_0} = \textnormal{pow}_{\ell}(Q)^{e-1}(X^{\ell})\cdot(1+(Q^e)) = \textnormal{pow}_{\ell}(Q)^{e-1}(X^{\ell})+(Q^e),
    \end{align*}
    which implies that $Q^e$ divides $ \textnormal{pow}_{\ell}(Q)^{e-1}(X^{\ell}) $. This is only possible if $ \nu_{Q}(\textnormal{pow}_{\ell}(Q)(X^{\ell})) > 1 $, i.e., if $ Q^2 $ divides $ \textnormal{pow}_{\ell}(Q)(X^{\ell}) $. From this, we conclude that
    \begin{align*}
        Q \divides \frac{d}{dX}\textnormal{pow}_{\ell}(Q)(X^{\ell}) = \textnormal{pow}_{\ell}(Q)'(X^{\ell})\cdot \ell X^{\ell-1}.
    \end{align*}
Since $ p \not\divides \ell $ and $ Q \neq X $, this implies that 
\begin{align*}
    Q \divides \gcd(\textnormal{pow}_{\ell}(Q)(X^{\ell}),\textnormal{pow}_{\ell}(Q)'(X^{\ell})).
\end{align*}
An equivalent way of stating this last formula is the following: If $ \xi \in \mathbb{F}_{q^{\textnormal{deg}Q}} $ is any of
the roots of $Q$, then $ \xi^{\ell} $  is a root of both $ \textnormal{pow}_{\ell}(Q) $ and $ \textnormal{pow}_{\ell}(Q)' $. That $ \xi^{\ell} $ is a root of the irreducible polynomial $ \textnormal{pow}_{\ell}(Q) $ shows that $ \xi^{\ell} $ is algebraic of degree $ \textnormal{deg~pow}_{\ell}(Q) $ over $ \mathbb{F}_q $, whence it is impossible for it to be also a root of the non-zero polynomial $ \textnormal{pow}_{\ell}(Q)' $ of strictly smaller degree, a contradiction. This completes our argument
that the minimal polynomial of $ \alpha^{\ell} $ on $U_0$ is $ \textnormal{pow}_{\ell}(Q)^{e} $.

Note that since $U_0$ is by definition an $ \alpha^{\ell} $-cyclic subspace of $V$, the $\mathbb{F}_q$-dimension of $U_0$ is equal to the degree of the minimal polynomial of $ \alpha^{\ell} $ on $U_0$. Hence
\begin{align*}
    \textnormal{dim}_{\mathbb{F}_q}(U_0) = \textnormal{deg~pow}_{\ell}(Q)^e = e\cdot \frac{\textnormal{deg}(Q)}{\textnormal{ind}_{\ell}(Q)} = \frac{\textnormal{dim}_{\mathbb{F}_q}V}{\textnormal{ind}_{\ell}(Q)}.
\end{align*}
Moreover, since each subspace $U_i$ for $ i = 0,1,\ldots,\textnormal{ind}_{\ell}(Q)-1 $ is an iterated image of $U_0$ under $ \alpha $, an $\mathbb{F}_q$-automorphism of $V$ that commutes with $ \alpha^{\ell} $, it follows that each $U_i$
is $\alpha^{\ell}$-cyclic and $\alpha^{\ell}$ has the minimal polynomial $ \textnormal{pow}_{\ell}(Q)^{e} $ on $ U_i $ (in particular, $ \textnormal{dim}_{\mathbb{F}_q}U_i = \textnormal{dim}_{\mathbb{F}_q}U_0 $).

It remains to show that $ V = \bigoplus_{i=0}^{\textnormal{ind}_{\ell}(Q)-1} U_i $. Because $\sum_{i=0}^{\textnormal{ind}_{\ell}(Q)-1}{\textnormal{dim}(U_i)}=\textnormal{dim}(V)$, it suffices to prove that $V=\sum_{i=0}^{\textnormal{ind}_{\ell}(Q)-1} U_i$. To see this, observe that by construction, we have
\begin{align*}
    \mathbb{F}_q[X]/(Q) = \sum_{i=0}^{\textnormal{ind}_{\ell}(Q)-1}(U_i\pmod{Q}). \tag{3} \label{eq3}
\end{align*}
In order to write an arbitrary element of $V$,
\begin{align*}
    v = \sum_{j=0}^{e-1}v_jQ^j+(Q^e),
\end{align*}
where $v_j \in \mathbb{F}_q[X]$ with $\textnormal{deg}~v_j < \textnormal{deg}~Q$, as a sum of elements
\begin{align*}
    u_i = X^i\sum_{j=0}^{e-1}A^jE_{i,j}Q^j+(Q^e) \in U_i
\end{align*}
for $ i = 0,1,\ldots,\textnormal{ind}_{\ell}(Q)-1 $ we take a recursive approach: we successively choose for $j = 0,1,\ldots, e-1$ the values of the “$j$-th digits” $E_{i,j}$ of $v_i$ modulo $ Q $ such that the “$j$-th digit” $ v_j = (A^j\sum_{i=0}^{\textnormal{ind}(Q)-1}X^iE_{i,j}) $ mod $ Q $ assumes the desired value (which is possible by formula (\ref{eq3})). Note that this may cause some constant carry-overs to later “digits” $v_{j'}$ with $j'>j$, which is not a problem as long as those carry-overs are accounted for when adjusting the values of those digits.
 \end{proof}

\begin{lemma}\label{lemma 2.18}
    Let $q = p^f$ be a prime power, and let $\alpha$ be a primary $\mathbb{F}_q$-automorphism
of a finite-dimensional $\mathbb{F}_q$-vector space $V$, with minimal polynomial $Q^e$ where $Q \in \mathbb{F}_q[X]$ is monic and irreducible, and $e \in \mathbb{N}^+$. Moreover, let $ \ell $ be a positive integer that
divides $\textnormal{ord}(\alpha)_p$ (in particular, $ \ell $ is a power of $p$). Set $\ell':=\min(\ell,e)$, and write $\textnormal{dim}_{\mathbb{F}_q}
(V) = e~\textnormal{deg}Q = a\ell'+b $
with $a, b \in \mathbb{Z}$, $a>0$ and $0 \leq b \leq \ell'-1$. The following hold:
\begin{enumerate}
    \item The primary rational canonical form of $\alpha^{\ell}$ on $V$ has $ \ell' $ blocks; $b$ of them are of the form $\textnormal{Comp}(\textnormal{pow}_{\ell}(Q)^{a+1})$, and the other $ \ell'-b $ blocks are of the form $\textnormal{Comp}(\textnormal{pow}_{\ell}(Q)^{a})$.

    \item If $Q = X-1$, then for each fixed $\alpha$-unit $u \in V$, the $ \mathbb{F}_q $-subspaces
    \begin{align*}
        V_i:= \bigoplus_{j=0}^{\lfloor \frac{e-1-i}{\ell}\rfloor} \mathbb{F}_qu^{(\alpha-1)^{i+\ell j}}~~~~for~~~ i=0,1,\ldots,\ell'-1
    \end{align*}
form an $ \alpha^{\ell} $-block subspace decomposition of $V$ . The minimal polynomial of $ (\alpha^{\ell})_{|V_i} $ is $ (X-1)^{a+1} $ if $ i \in \{0,1,\ldots,b-1\} $, and it is $ (X-1)^a $ if $ i \in \{b,b+1,\ldots,\ell'-1\} $.
\end{enumerate}
\end{lemma}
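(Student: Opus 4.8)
The plan is to deduce both statements from the cyclic model, after isolating one arithmetic fact about $\pow_{\ell}(Q)$ that uses crucially that $\ell$ is a power of $p$. Since $\alpha$ is primary with minimal polynomial $Q^e$, the space $V$ is $\alpha$-cyclic of dimension $e\deg Q$, so I may identify $V=\IF_q[X]/(Q^e)$ with $\alpha$ acting as multiplication by $X$. Write $P:=\pow_{\ell}(Q)$ and $\ell':=\min(\ell,e)$, and let $e=a\ell'+b$ with $a\ge 1$, $0\le b<\ell'$ (division with remainder of the exponent $e$ by $\ell'$). Because $\ell$ is a power of $p$, I would record two facts. First, $\deg P=\deg Q$, i.e.\ $\textnormal{ind}_{\ell}(Q)=1$: if $\xi$ is a root of $Q$, then $\xi^{\ell}=\xi^{p^{k}}$, and the Frobenius being invertible on the finite field $\IF_q(\xi)$, $\xi$ is itself a $p$-power of $\xi^{\ell}$, so $\IF_q(\xi)=\IF_q(\xi^{\ell})$. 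Second, and decisively, $\nu_Q\bigl(P(X^{\ell})\bigr)=\ell$: over $\overline{\IF_q}$ write $P(Z)=\prod_j(Z-\mu_j)$ with the $\mu_j$ distinct; then $P(X^{\ell})=\prod_j(X-\mu_j^{1/\ell})^{\ell}$, where $\mu_j^{1/\ell}$ is the unique $\ell$-th root, so the root $\xi$ of $Q$ (arising from $\mu_j=\xi^{\ell}$) has multiplicity exactly $\ell$; as $Q$ is the separable minimal polynomial of $\xi$, this gives $\nu_Q(P(X^{\ell}))=\ell$ and hence $\nu_Q\bigl(P^{t}(X^{\ell})\bigr)=t\ell$ for all $t\ge 0$.

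For statement (1), the second fact shows that the minimal polynomial of $\alpha^{\ell}$ is $P^{\lceil e/\ell\rceil}$, so $V$ is a $P$-primary module over $\IF_q[Y]$ with $Y$ acting as $\alpha^{\ell}$. Set $N:=P(\alpha^{\ell})$, i.e.\ multiplication by $P(X^{\ell})$. Writing $P^{t}(X^{\ell})=Q^{t\ell}R_t$ with $Q\nmid R_t$, one gets $\ker N^{t}=\{g+(Q^e):Q^{\max(e-t\ell,0)}\mid g\}$, whence $\dim_{\IF_q}\ker N^{t}=\deg Q\cdot\min(e,t\ell)$. By the standard theory of primary rational canonical forms, the number of blocks of $\alpha^{\ell}$ with exponent at least $t$ is $\bigl(\dim_{\IF_q}\ker N^{t}-\dim_{\IF_q}\ker N^{t-1}\bigr)/\deg P=\min(e,t\ell)-\min(e,(t-1)\ell)$, using $\deg P=\deg Q$. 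With $e=a\ell'+b$ this count is $\ell'$ for $1\le t\le a$, it is $b$ for $t=a+1$, and it vanishes for $t\ge a+2$; hence $\alpha^{\ell}$ has exactly $\ell'$ primary blocks, of which $b$ equal $\Comp(P^{a+1})$ and $\ell'-b$ equal $\Comp(P^{a})$.

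For statement (2), where $Q=X-1$, the key extra ingredient is the identity $\alpha^{\ell}-\id=(\alpha-\id)^{\ell}$, valid since $\ell$ is a power of $p$ and $\id$ commutes with $\beta:=\alpha-\id$; here $\beta$ is nilpotent of index $e$ with $\beta^{m}(u)=(X-1)^{m}u$ in the model. Because an $\alpha$-unit $u$ is exactly a unit of the local ring $\IF_q[X]/((X-1)^e)$, multiplication by $u$ is an $\alpha^{\ell}$-equivariant $\IF_q$-automorphism of $V$, so it suffices to treat $u=1$. As $i$ ranges over $\{0,\ldots,\ell'-1\}$ and $j$ over its range, the vectors $\beta^{i+\ell j}(u)=(X-1)^{i+\ell j}$ run exactly once through the basis $\{(X-1)^{0},\ldots,(X-1)^{e-1}\}$, so $V=\bigoplus_{i=0}^{\ell'-1}V_i$. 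Each $V_i$ is $\alpha^{\ell}$-invariant, since $\alpha^{\ell}\beta^{i+\ell j}(u)=\beta^{i+\ell j}(u)+\beta^{i+\ell(j+1)}(u)$ with both summands in $V_i$, and $\alpha^{\ell}$-cyclic with generator $\beta^{i}(u)$, since $(\alpha^{\ell}-\id)^{j}\beta^{i}(u)=\beta^{i+\ell j}(u)$. Thus $(\alpha^{\ell}-\id)|_{V_i}$ is nilpotent of index $\dim V_i=\lfloor(e-1-i)/\ell\rfloor+1$, so the minimal polynomial of $(\alpha^{\ell})|_{V_i}$ is $(X-1)^{\lfloor(e-1-i)/\ell\rfloor+1}$; a direct computation with $e=a\ell'+b$ shows this exponent equals $a+1$ for $0\le i\le b-1$ and $a$ for $b\le i\le\ell'-1$, as claimed.

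The main obstacle is the multiplicity identity $\nu_Q(P(X^{\ell}))=\ell$ together with $\textnormal{ind}_{\ell}(Q)=1$; these are exactly the points where the hypothesis $\ell\mid\ord(\alpha)_p$ (forcing $\ell$ to be a $p$-power, so that $x\mapsto x^{\ell}$ is inseparable and Frobenius-invertible) is indispensable. Once they are established, statement (1) is bookkeeping with kernel dimensions and the structure theorem, and statement (2) is an explicit verification; I would also be careful to record that $a,b$ come from dividing the exponent $e$, rather than $\dim V$, by $\ell'$, which is what makes the block dimensions sum to $e\deg Q$.
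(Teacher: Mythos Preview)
Your argument is correct, and for statement~(2) it is essentially the paper's own computation, streamlined by the reduction to $u=1$ via multiplication by a unit and by invoking the identity $\alpha^{\ell}-\id=(\alpha-\id)^{\ell}$ rather than expanding $(X^{\ell}-1)^{k}V_i$ by hand.

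For statement~(1), however, your route is genuinely different. The paper proves~(2) first, then bootstraps~(1) from it: it applies Lemma~\ref{lem 2.17} to $\alpha^{\ell}$ and to $\alpha$, raising each to the $\ord(Q)$-th power so that every primary block becomes unipotent, then invokes the already-established statement~(2) on the resulting $(X-1)^{e}$-blocks and reads off the multiplicities $e_i$ by comparing the two block counts for $\alpha^{\ell\,\ord(Q)}$. Your proof bypasses Lemma~\ref{lem 2.17} entirely by isolating the single arithmetic fact $\nu_{Q}\bigl(\pow_{\ell}(Q)(X^{\ell})\bigr)=\ell$ (together with $\operatorname{ind}_{\ell}(Q)=1$), both of which are immediate from $\ell$ being a $p$-power; the block sizes then fall out of the standard kernel-filtration count $\dim\ker N^{t}-\dim\ker N^{t-1}=\deg P\cdot\bigl(\min(e,t\ell)-\min(e,(t-1)\ell)\bigr)$. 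This is more self-contained and conceptually cleaner, at the cost of not exhibiting an explicit block subspace decomposition in the non-unipotent case. The paper's approach, by contrast, reuses Lemma~\ref{lem 2.17} and makes the logical dependence of~(1) on~(2) explicit, which fits the paper's narrative of building up the toolbox lemma by lemma.

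Your closing remark is also well taken: for the block dimensions to sum to $e\deg Q$ one needs $e=a\ell'+b$, not $e\deg Q=a\ell'+b$ as literally written in the statement. The paper's own proof tacitly uses $e=a\ell'+b$ when it applies statement~(2) to each $e$-dimensional block $\Comp((X-1)^{e})$, so your reading is the intended one; the two coincide in every application in the paper since statement~(2) has $\deg Q=1$.
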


\begin{proof}
   We assume without loss of generality that $V = \mathbb{F}_q[X]/(Q^e)$, that $\alpha$ is the multiplication by $ X $ modulo $Q^e$, and (for statement (2)) that $u = 1 + (Q^e)$.

   We start with the proof of statement (2). Note that under the above assumptions, we have
   \begin{align*}
       V_i:= \bigoplus_{j=0}^{\lfloor \frac{e-1-i}{\ell}\rfloor}\mathbb{F}_q((X-1)^{i+\ell j}+((X-1)^e)).
   \end{align*}
Since $ \{(X-1)^{k}+((X-1)^e): k = 0,1,\ldots, e-1 \}$ is an $\mathbb{F}_q$-basis of $V$, it is clear that the subspace sums as which the $V_i$ are defined are indeed direct, and that $ V = \bigoplus_{i=0}^{\ell-1}V_i $. Moreover, if $\ell\geq e$ (i.e., if $\ell'=e$), then $\ell=p^{\lceil\log_p(e)\rceil}=\textnormal{ord}(\alpha)_p=\textnormal{ord}(\alpha)$ necessarily, so that $\alpha^{\ell}=\textnormal{id}$, and the rest of statement (2) is thus easily verified under that assumption. For the remainder of the proof of statement (2), we assume that $\ell<e$, which implies that $\ell'=\ell$.

Let $ i \in \{0,1,\ldots,b-1\} $. Then
\begin{align*}
    (X^{\ell}-1)^{a+1}V_i &= (X-1)^{(a+1)\ell}\bigoplus_{j=0}^{\lfloor \frac{e-1-i}{\ell}\rfloor}\mathbb{F}_q((X-1)^{i+\ell j}+((X-1)^e)) \\&= \bigoplus_{j=0}^{\lfloor \frac{e-1-i}{\ell}\rfloor}\mathbb{F}_q((X-1)^{i+(a+j+1)\ell}+((X-1)^e)) \\&= \bigoplus_{j=0}^{\lfloor \frac{e-1-i}{\ell}\rfloor} \mathbb{F}_q\{((X-1)^e)\} = \{((X-1)^e)\} =  \{0_{V}\},  
\end{align*}
 where the third equality uses that
$$ i+(a+j+1)\ell\geq(a+1)\ell = a\ell+\ell>a\ell+b = e~\textnormal{deg}(X-1) = e. $$
It follows that the minimal polynomial of $\alpha^{\ell}$ on $V_i$ divides $(X-1)^{a+1}$. On the other hand, 
$$ (X^{\ell}-1)^{a}V_i \supseteq(X^{\ell}-1)^{a}\mathbb{F}_q((X-1)^i+((X-1)^e
)) = \mathbb{F}_q((X-1)^{a\ell+i}+((X-1)^e
)) \neq \{0_V\} $$ because $a\ell+i\leq a\ell+b-1< e$. This shows that the minimal polynomial of $\alpha^{\ell}$ on $V_i$ is exactly $(X-1)^{a+1}$. By a similar argument, the minimal polynomial of $ \alpha^{\ell} $ on $V_i$ is $(X-1)^a$ if $i \in \{b, b + 1,\ldots, \ell-1\}$. In particular, for each $i$, the degree of the minimal polynomial of $\alpha^{\ell}$ on $V_i$ equals the $\mathbb{F}_q$-dimension of $V_i$, whence $V_i$ is $\alpha^{\ell}$-cyclic, and statement (2) is proved.

For statement (1), observe that since $ \textnormal{pow}_{\ell}(Q)(X^{\ell}) \equiv 0 \pmod{Q} $, we have $ \textnormal{pow}_{\ell}(Q)^e(X^{\ell}) \equiv 0 \pmod{Q^e} $, whence the minimal polynomial of $ \alpha^{\ell} $ on $V$ divides $ \textnormal{pow}_{\ell}(Q)^e $. Let $ V = \bigoplus_{i=0}^{s-1}V_i $ be an $ \alpha^{\ell} $-block subspace decomposition of $V$. The minimal polynomial of $ \alpha^{\ell} $ on $V_i$ divides $ \textnormal{pow}_{\ell}(Q)^e  $ and thus is of the form $ \textnormal{pow}_{\ell}(Q)^{e_i} $ for some $ e_i \in \{1,2,\ldots,e\} $. By comparing the $\mathbb{F}_q$-dimensions of $V$ and $ \bigoplus_{i=0}^{s-1}V_i $, we have $ \sum_{i=0}^{s-1}e_i=e $.

According to Lemma~\ref{lem 2.17}, when raising $ \alpha^{\ell} $ to the $\textnormal{ord}(Q)$-th power, each block
\\ $\textnormal{Comp}(\textnormal{pow}_{\ell}(Q)^{e_i})$ of $ \alpha^{\ell} $ splits into $\textnormal{deg}~Q$ blocks $\textnormal{Comp}((X-1)^{e_i})$. Hence, for each $ \epsilon \in \{1,2,\ldots,e\} $, the number of blocks of $ \alpha^{\ell~\textnormal{ord}(Q)} $ of the form $ \textnormal{Comp}((X-1)^{\epsilon}) $ is equal to 
\begin{align*}
    \textnormal{deg}~(Q)\cdot|\{i\in\{0,1,\ldots,s-1\}: e_i=\epsilon\}|.
\end{align*}
On the other hand, also by Lemma~\ref{lem 2.17}, when we raise $ \alpha $ to the $\textnormal{ord}(Q)$-th power, its
only block $\textnormal{Comp}(Q^e)$ on $V$ splits into $\textnormal{deg}~Q$ blocks $\textnormal{Comp}((X-1)^e)$. Moreover, by the already proved statement (2) of this lemma, if we subsequently raise $ \alpha^{\textnormal{ord}(Q)} $ to the $ \ell $-th power, each of these blocks $\textnormal{Comp}((X-1)^e)$ splits into $b$ blocks $\textnormal{Comp}((X-1)^{a+1})$
and $\ell'-b$ blocks $\textnormal{Comp}((X-1)^{a})$. It follows that $ \alpha^{\ell~\textnormal{ord}(Q)} $ has
\begin{itemize}
    \item $b~~\textnormal{deg}~Q$ blocks of the form $\textnormal{Comp}((X-1)^{a+1})$ and
    \item  $ (\ell'-b) $~$ \textnormal{deg}~Q $ blocks of the form $\textnormal{Comp}((X-1)^a)$.
\end{itemize}
Comparing with the above block count in terms of the $e_i$, we find that $s = \ell'$ and
  \[
   |\{i\in\{0,1,\ldots,\ell'-1\}: e_i=\epsilon\}| =
     \begin{cases}
        b  &\quad\text{if $ 
\epsilon = a+1 $,}\\
        \ell'-b  &\quad\text{if $ 
\epsilon = a  $,} \\ 0  &\quad\text{otherwise}. \\
     \end{cases}.
\]
In other words, the primary rational canonical form of $ \alpha^{\ell} $ has $b$ blocks of the form \\$\textnormal{Comp}(\textnormal{pow}_{\ell}(Q)^{a+1})$
and $ \ell'-b $ blocks of the form $\textnormal{Comp}(\textnormal{pow}_{\ell}(Q)^a)$, and no other blocks. This is just what we needed to show.
\end{proof}

\section{Cycle structures of cascaded FSRs via wreath products}\label{results}
In this section, we state our main results and prove them. In the following theorem, we give a nontrivial upper bound on the maximum period of a cascaded connection.

\begin{theorem} \label{thm 3.2}
Let $n$ and $m$ be positive integers, and let $\tilde{f}\in\Sym(\IF_2^n)$ and $\tilde{g}\in\Sym(\IF_2^m)$ be transition functions of FSRs (with characteristic functions $f$ and $g$, respectively). Unless $m=n=1$, the maximum period of an output sequence of the cascaded connection $\FSR(f;g)$ is at most
\begin{align*}
\textnormal{max}\{2^n(2^m-1), 2^m(2^n-1)\} = 2^{\textnormal{min}\{m,n\}}(2^{\textnormal{max}\{m,n\}}-1).
\end{align*}
\end{theorem}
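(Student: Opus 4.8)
The plan is to combine the wreath-product description from Proposition~\ref{prop 2.10} with P\'olya's cycle-counting recipe (Remark~\ref{remark 2.6}) and a parity argument. By Lemma~\ref{lemma 2.12}, the maximum period of an output sequence of $\FSR(f;g)$ equals the longest cycle length of the transition function $\tilde f\ast\tilde g$, so it suffices to bound the latter. By Proposition~\ref{prop 2.10}, $\tilde f\ast\tilde g$ is the wreath product element $\tilde f\cdot(\tilde g\,\rho(\vec t)^{\pi_1(\vec y)})_{\vec y\in\IF_2^n}$ with block permutation $\tilde f\in\Sym(\IF_2^n)$ and fibre permutations $f_{\vec y}=\tilde g\,\rho(\vec t)^{y_0}$. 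Hence, by Remark~\ref{remark 2.6}, every cycle of $\tilde f\ast\tilde g$ has length $\ell\cdot d$, where $\ell$ is the length of some cycle $C$ of $\tilde f$ on $\IF_2^n$ and $d$ is the length of some cycle of the associated forward cycle product $P_C=\prod_{\vec y\in C}f_{\vec y}\in\Sym(\IF_2^m)$. Since $\ell\le 2^n$ and $d\le 2^m$, the task reduces to ruling out, for $(m,n)\neq(1,1)$, the extremal value $\ell\cdot d=2^{m+n}$.

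I would then split into two cases. If $\ell<2^n$, then $\ell\le 2^n-1$, and the trivial bound $d\le 2^m$ already yields $\ell d\le 2^m(2^n-1)$. The interesting case is $\ell=2^n$, i.e.\ $\FSR(f)$ generates a De Bruijn sequence; here the cycle $C$ runs through all of $\IF_2^n$, so $P_C$ is the product of \emph{all} $2^n$ fibre permutations. The key step is to show that, unless $m=n=1$, the permutation $P_C$ cannot be a single $2^m$-cycle, so that its longest cycle has length at most $2^m-1$ and hence $\ell d\le 2^n(2^m-1)$. Combining the two cases yields the bound $\max\{2^n(2^m-1),2^m(2^n-1)\}$.

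To prove the key step I would use the sign homomorphism $\operatorname{sgn}\colon\Sym(\IF_2^m)\to\{\pm1\}$. Writing $\tau:=\rho(\vec t)$, each fibre permutation equals $\tilde g$ (when $y_0=0$) or $\tilde g\tau$ (when $y_0=1$), and exactly $2^{n-1}$ of the $\vec y\in\IF_2^n$ satisfy $y_0=1$; thus, as a word, $P_C$ is a product of $2^n$ factors $\tilde g$ and $2^{n-1}$ factors $\tau$. Since $\tau$ is a translation by a nonzero vector, it is a fixed-point-free involution made of $2^{m-1}$ transpositions, so $\operatorname{sgn}(\tau)=(-1)^{2^{m-1}}$. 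As $2^n$ is even, $\operatorname{sgn}(P_C)=\operatorname{sgn}(\tilde g)^{2^n}\operatorname{sgn}(\tau)^{2^{n-1}}=(-1)^{2^{m+n-2}}$, which equals $+1$ precisely when $m+n\ge 3$. Because a single $2^m$-cycle has sign $(-1)^{2^m-1}=-1$, the even permutation $P_C$ cannot be such a cycle when $(m,n)\neq(1,1)$, exactly as required.

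The main obstacle is this parity computation: it is what pins down the gap of $2^{\min\{m,n\}}$ between the trivial bound $2^{m+n}$ and the true maximum, and it is also what isolates $(m,n)=(1,1)$ as the sole exception, since there $\operatorname{sgn}(P_C)=-1$ and a period of $2^{m+n}=4$ is indeed attainable. I would finish by recording the elementary identity $\max\{2^n(2^m-1),2^m(2^n-1)\}=2^{m+n}-2^{\min\{m,n\}}=2^{\min\{m,n\}}(2^{\max\{m,n\}}-1)$, which reconciles the two forms of the bound stated in the theorem.
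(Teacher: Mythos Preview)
Your proposal is correct and follows essentially the same approach as the paper: reduce to cycle lengths via Lemma~\ref{lemma 2.12}, view $\tilde f\ast\tilde g$ as a wreath product element (Proposition~\ref{prop 2.10}), use P\'olya's description (Remark~\ref{remark 2.6}) to write each cycle length as $\ell\cdot d$, and in the De~Bruijn case $\ell=2^n$ rule out a $2^m$-cycle for the forward cycle product by computing its sign/parity. Your sign computation $\operatorname{sgn}(P_C)=\operatorname{sgn}(\tilde g)^{2^n}\operatorname{sgn}(\tau)^{2^{n-1}}=(-1)^{2^{m+n-2}}$ is exactly the multiplicative version of the paper's additive parity calculation, and both isolate $(m,n)=(1,1)$ as the unique exception in the same way.
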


\begin{proof}
We recall that by Lemma \ref{lemma 2.12}(2), periods of output sequences of $\FSR(f;g)$ are the same as cycle lengths of the transition function $\tilde{f}\ast\tilde{g}$, so we prove the asserted upper bound for those cycle lengths instead. Moreover, by Proposition~\ref{prop 2.10}, $\tilde{f}\ast\tilde{g}$ can be viewed as the wreath product element $\tilde{f}\cdot(\tilde{g}\rho(\Vec{t})^{\pi_1(\Vec{y})})_{\Vec{y}\in\IF_2^n} $, where $\Vec{t}=(0,\ldots,0,1)^T$. By Remark~\ref{remark 2.6}, we find that each cycle length of $ \tilde{f}\cdot(\tilde{g}\rho(\Vec{t})^{\pi_1(\vec{y})})_{\vec{y}\in\IF_2^n}$ is the product of a cycle length of $\tilde{f}$ with the cycle length of a certain permutation of $\IF_2^m$~(a certain forward cycle product). Therefore, if $\tilde{f}$ is not a $2^n$-cycle, then the maximum cycle length of $\tilde{f}\ast\tilde{g}$ can be at most $(2^n-1)\cdot2^m$, as desired. We may then assume that $\tilde{f}$ is a $2^n$-cycle (i.e., a De Bruijn cycle).~The associated forward cycle product is a product of $2^n$ permutations of $\IF_2^m$, of which $2^{n-1}$ are $\tilde{g}$ and $2^{n-1}$ are $\tilde{g}\rho(\Vec{t})$~(there are as many vectors in $\IF_2^n$ with the first bit 0 as there are with first bit 1). Denote by $\pi(\sigma)\in\IF_2$ the parity of $\sigma\in\Sym(\IF_2^m) $, i.e., $\pi(\sigma)=0$ if $\sigma$ is an even permutation (it can be written as a product of an even number of transpositions), and $\pi(\sigma)=1$ if $\sigma$ is odd. Then the parity of the said forward cycle product is
\[
2^{n-1}\pi(g)+2^{n-1}\pi(g\rho(\Vec{t}))=2^n\cdot\pi(g)+2^{n-1}\cdot\pi(\rho(\Vec{t})) \equiv 0 \pmod{2},
\]
using the assumption that $n>1$ or $m>1$~(note that $m>1$ causes $\rho(\Vec{t})$, which is a (disjoint) product of $2^{m-1}$ transpositions, to be an even permutation). The forward cycle product is thus an even permutation of $\IF_2^m$. In particular, it cannot be a $2^m$-cycle, so its maximum cycle length is at most $2^m-1$, and the maximum cycle length of $\tilde{f}\ast\tilde{g}$ is at most $2^n(2^m-1)$, as required.
\end{proof}

 Our second main result is a detailed and explicit description of the periods of output sequences of $\FSR(f;g)$ in the important special case where $\tilde{f}$ is a De Bruijn cycle and $\tilde{g}$ is a linear permutation. We focus on the cycle structure of the transition function $\tilde{f}\ast\tilde{g}$, in view of Lemma~\ref{lemma 2.12}. We also use, again, that by Proposition \ref{prop 2.10}, $\tilde{f}\ast\tilde{g}$ is the wreath product element
 \[
 \tilde{f}\cdot(\tilde{g}\rho(\Vec{t})^{\pi_1(\Vec{y})})_{\Vec{y}\in\IF_2^n} \in \Sym(\IF_2^m) \wr \Sym(\IF_2^n),
 \]
 where $\Vec{t}=(0,\ldots,0,1)^T$ and $\pi_1$ denotes the projection onto the first coordinate.
 
 It is not hard to check that the first $m$ iterates $\tilde{g}^k(\Vec{t})$ for $k=0,1,\ldots,m-1$ form an $\mathbb{F}_2$-basis of $\mathbb{F}_2^m$. Moreover, the representation matrix of $\tilde{g}$ with respect to this basis is $\Comp(P(X))$ for some monic polynomial $P(X)\in\IF_2[X]$ of degree $m$. Each composition $\tilde{g}\rho(\Vec{t})^{\pi_1(\Vec{y})}$ lies in the group $\AGL_m(2)$ of $\mathbb{F}_2$-affine permutations of $\mathbb{F}_2^m$, and so we can view $\tilde{f}\ast\tilde{g}$ more specifically as an element of $\AGL_m(2)\wr\Sym(\IF_2^n)$, which is more advantageous from a computational point of view. In order to describe the cycle type $\CT(\tilde{f}\ast\tilde{g})$, we note that by Remark~\ref{remark 2.6},
 \[
 \CT(\tilde{f}\ast\tilde{g}) = \CT(\tilde{f}\cdot(\tilde{g}\rho(\Vec{t})^{\pi_1(\Vec{y})})_{\Vec{y}\in\IF_2^n}) = \BU_{2^n}(\CT(\phi)),
 \]
 where $\BU_{2^n}$ is the $2^n$-blow-up function and $\phi$ is the following forward cycle product of $\tilde{f}\ast\tilde{g}$:
 \[
 \tilde{g}\rho(\Vec{t})^{\pi_1(\Vec{0})}\cdot \tilde{g}\rho(\Vec{t})^{\pi_1({\tilde{f}(\Vec{0})})}\cdots \tilde{g}\rho(\Vec{t})^{\pi_1({\tilde{f}^{2^n-1}(\Vec{0})})}.
 \]
Using the semidirect product structure of $\AGL_m(2) $, we can bring this product into the form 
\begin{align*}
\tilde{g}^{2^n}\cdot\rho(\sum_{k=0}^{2^n-1}\pi_1(\tilde{f}^k(\Vec{0})\Vec{t}^{(\tilde{g}^{2^n-1-k})})), \tag{I}\label{I}
\end{align*}
which is the standard form of this affine permutation (it is written as the product/composition of the linear permutation $\tilde{g}^{2^n}$ with a translation). We intend to understand the cycle type of (\ref{I}). Now, cycle types of affine permutations of finite vector spaces were described in ~\cite{Bors}, but we need to push the theory developed there further to understand the cycle type of the complex expression (\ref{I}) in terms of the involved parameters. First, we apply a certain isomorphism to bring everything into a nicer form. Specifically, we note once more that the representation matrix of $\tilde{g}\in\GL_m(2) $ with respect to the $ \mathbb{F}_2 $-basis $\tilde{g}^k(\Vec{t})$ for $k=0,1,\ldots,m-1$ is $\Comp(P(X))$ for some monic polynomial $P(X)\in \IF_2[X]$ of degree $m$. Under the $\IF_2$-vector space isomorphism $\IF_2^m \rightarrow \IF_2[X]/(P(X))$ with $\tilde{g}^k(\Vec{t}) \mapsto X^k+(P(X)) $ for $k=0,1,\ldots,m-1$, the linear automorphism $\tilde{g}$ corresponds to modular multiplication by $X$, and $\Vec{t}=(0,0,\ldots,0,1)^T$ corresponds to $1+(P(X)) $. Applying this isomorphism, the problem of determining the cycle type of (\ref{I}) turns into that of determining the cycle type of the affine permutation 
\[
\Gamma:R(X)+(P(X)) \mapsto X^{2^n}R(X)+\sum_{k=0}^{2^n-1}\pi_1(\tilde{f}^k(\Vec{0}))X^{2^n-1-k}\cdot 1+(P(X))
\]
of the ring $ \mathbb{F}_2[X]/(P(X)) $.
Now, if we factor $P(X)=Q_1(X)^{e_1}Q_2(X)^{e_2}\cdots Q_r(X)^{e_r}$ into pairwise coprime powers of irreducible polynomials $Q_i(X)$, then the Chinese Remainder Theorem tells us that there is a ring isomorphism
\[
\mathbb{F}_2[X]/(P(X)) \rightarrow \prod_{i=1}^r\IF_2[X]/(Q_i(X)^{e_i}),
\]
under which $\Gamma$ corresponds to the component-wise application of its modular reductions $\Gamma\bmod{Q_i(X)^{e_i}}$ for $i=1,\ldots,r$. Therefore, it suffices to study the cycle type of $\Gamma$ modulo each $ Q_i^{e_i} $, then take their $\divideontimes$-product as defined by Wei and Xu (see also the text passage before Lemma \ref{absorptionLem}). We fix $i$ and distinguish between two cases:
\begin{case}\label{case 1}
Assume $ Q_i(X) \neq X-1 $. By Lemmas \ref{lem 2.16} and \ref{lemma 2.18}, it can be proved that the automorphism $ R(X)+(Q_i(X)^{e_i}) \mapsto X^{2^n}R(X)+(Q_i(X)^{e_i}) $ of $\IF_2[X]/(Q_i(X)^{e_i}) $ has a primary rational canonical form with blocks of the form $\Comp(\pow_{2^n}(Q_i)^{e}) $, for some $ e \in \mathbb{N}^+ $ (not necessarily the same $ e $ for each block), where $ \pow_{2^n}(Q_i)$ is the minimal polynomial over $\IF_2$ of $\xi^{2^n}$, where $\xi$ is any of the roots of $Q_i$ in the algebraic closure $\overline{\IF_2}$. Because $\xi^{2^n}$ is the image of $\xi$ under the field automorphism $z\mapsto z^{2^n}$ of $\overline{\IF_2}$, it follows that
\[
\ord(\pow_{2^n}(Q_i))=\ord(\xi^{2^n})=\ord(\xi)=\ord(Q_i)>1.
\]
Therefore, we have $\pow_{2^n}(Q_i)\neq X-1 $, whence an application of Proposition~\ref{prop 2.12} yields that 
\[
\CT(\Gamma\bmod{Q_i^{e_i}}) = \CT(R(X)+(Q_i(X)^{e_i}) \mapsto X^{2^n}R(X)+(Q_i(X)^{e_i})),
\]
the cycle type of the $2^n$-fold iterate of the multiplication by $X$ modulo $Q_i(X)^{e_i}$. Now, the cycle type of the latter can be read off from Proposition~\ref{prop 2.12}, and cycle types of iterates can be computed via the following simple formula: if $\gamma=\CT(\sigma)=\prod_{\ell}{x_{\ell}^{e_{\ell}}}$, then for each $n\in\mathbb{Z}$, one has
\begin{equation}\label{itEq}
\ite_n(\gamma):=\CT(\sigma^n)=\prod_{\ell}{x_{\ell/\gcd(\ell,n)}^{e_{\ell}\cdot\gcd(\ell,n)}}.
\end{equation}
\end{case}

\begin{case}\label{case 2}
Assume $Q_i(X)=X-1 $. Set 
\[
\chi_{\tilde{f}}(X):= \sum_{k=0}^{2^n-1}\pi_1(\tilde{f}^k(\Vec{0}))X^{2^n-k-1}\in\mathbb{F}_2[X],
\]
a polynomial associated with the De Bruijn cycle $\tilde{f}$, chosen such that $\Gamma(R(X)+(P(X)))=X^{2^n}R(X)+\chi_{\tilde{f}}(X)+(P(X))$. We take note of the following two properties of $\chi_{\tilde{f}}(X):$
\begin{enumerate}
\item $\chi_{\tilde{f}}(X) $ is nonzero with $\deg{\chi_{\tilde{f}}}(X)<2^n $. More specifically, $\chi_{\tilde{f}}(X)$ has exactly $2^{n-1}$ terms, each of degree less than $2^n$, which correspond to the positions on the unique cycle of $\tilde{f}$ where a vector with first bit $1$ sits.
\item  $X-1$ divides $\chi_{\tilde{f}}(X)$ if and only if the number $2^{n-1}$ of terms of $\chi_{\tilde{f}}(X)$ is even if and only if $n>1$. 
\end{enumerate}
In order to describe $\CT(\Gamma\bmod{(X-1)^{e_i}}) $, we need to consider the following subcases:
\begin{subcase}\label{subcase 1}
Assume $2^n\geq\ord((X-1)^{e_i}) $. Then, since $\ord((X-1)^{e_i}) $ is a power of $2$, it follows that $\ord((X-1)^{e_i})\divides 2^n$. Hence, the automorphism $ R(X)+((X-1)^{e_i}) \mapsto X^{2^n}R(X)+((X-1)^{e_i}) $ of $ \mathbb{F}_2[X]/((X-1)^{e_i}) $ is the identity. It follows that $ \Gamma\bmod{(X-1)^{e_i}}$ is the additive shift $R(X)+((X-1)^{e_i}) \mapsto R(X)+\chi_{\tilde{f}}(X)+((X-1)^{e_i}) $, whence all of its cycles are of length
\[
\ord(\chi_{\tilde{f}}(X)+((X-1)^{e_i})) = 
\begin{cases}
1, &\text{if }(X-1)^{e_i} \mid \chi_{\tilde{f}}(X), \\
2, &\text{if }(X-1)^{e_i} \nmid \chi_{\tilde{f}}(X).
\end{cases}
\]
\end{subcase}

\begin{subcase}\label{subcase 2.2}
Assume $ 2^n < \ord((X-1)^{e_i}) $. Again, since $\ord((X-1)^{e_i}) $ is a power of $2$, this implies that $2^n \mid \ord((X-1)^{e_i}) $. Moreover, because $\ord((X-1)^{e_i})=2^{\lceil\log_2(e_i)\rceil}$ is the smallest power of $2$ that is at least $e_i$, the subcase assumption also implies that $2^n<e_i$. The block structure of the automorphism $R(X)+((X-1)^{e_i}) \mapsto X^{2^n}R(X)+((X-1)^{e_i})$ is described in Lemma~\ref{lemma 2.18}(2). Write $e_i=a\cdot 2^n+b $ with $ a,b \in \mathbb{N}_0, a>0, b\in\{0,1,\ldots,2^n-1\}$. The said automorphism has $ 2^n $ primary rational canonical blocks, and a corresponding block subspace decomposition is
\[
\IF_2[X]/((X-1)^{e_i}) = \bigoplus_{k=0}^{2^n-1}{V_k},
\]
where $V_k=\bigoplus_{j=0}^{\lfloor\frac{e_i-1-k}{2^n}\rfloor}\IF_2((X-1)^{k+j2^n}+(X-1)^{e_i})$ for $k=0,1,\ldots,2^n-1$. Moreover, the restriction of that automorphism to $V_k$ has the minimal polynomial
\[
\centering
\begin{cases}
(X-1)^{a+1},  &\text{if }k\in\{0,1,\ldots,b-1\}, \\
(X-1)^a,  &\text{if }k\in\{b,b+1,\ldots,2^n-1\},
\end{cases}
\]
of degree
\[
a_k=
\begin{cases}
a+1, & \text{if }k\in\{0,1,\ldots,b-1\}, \\
a, & \text{if }k\in\{b,b+1,\ldots,2^n-1\}.
\end{cases}
\]
This allows us to view, via the isomorphism $\mathbb{F}_2 [X]/((X-1)^{e_i}) \rightarrow\prod_{k=0}^{2^n-1}V_k$, the affine permutation $\Gamma\bmod{(X-1)^{e_i}}$ as a component-wise application of affine permutations $ 
A_k\in\Sym(V_k)$. We note that the linear part of $A_k$ is the restriction $\alpha_k$ of $R(X)+((X-1)^{e_i})\mapsto X^{2^n}R(X)+((X-1)^{e_i})$ to $V_k$, and the constant part is the corresponding projection of $\chi_{\tilde{f}}(X)+((X-1)^{e_i})$ to $ V_k$. For each fixed $k$, by Proposition~\ref{prop 2.12}, there are two possibilities for $\CT(A_k)$, depending on whether or not the said projection of $\chi_{\tilde{f}}(X)+((X-1)^{e_i})$ is a unit (i.e., does not lie in the image of $\alpha_k-\id$). Let us try to understand this more explicitly. Since $\alpha_k-\id$ is the restriction to $V_k$ of modular multiplication by $X^{2^n}-1=(X-1)^{2^n}$, we find, using the definition of $V_k$, that the $\IF_2$-subspace of non-units in
\[
V_k=\bigoplus_{j=0}^{\lfloor\frac{e_i-1-k}{2^n}\rfloor}\IF_2((X-1)^{k+j2^n}+(X-1)^{e_i})
\]
is
\[
W_k:=\bigoplus_{j=1}^{\lfloor\frac{e_i-1-k}{2^n}\rfloor}\IF_2((X-1)^{k+j2^n}+(X-1)^{e_i}).
\]
Now, by Proposition~\ref{prop 2.15}, the cycle type of $\Gamma\bmod{(X-1)^{e_i}}$ as a whole is bijectively determined by the maximum element of the set
\[
\{0\}\cup\{1+\lfloor \log_2(a_k)\rfloor: 0\leq k \leq 2^n-1, (\text{constant part of }A_k)\notin W_k\}
\]
To get a more concrete description, we distinguish between two subsubcases:

\begin{subsubcase}
Assume $a+1$ is not a power of $2$. Then $\{\lfloor\log_2(a_k)\rfloor: 0\leq k \leq 2^n-1\} = \{\lfloor\log_2(a)\rfloor\}$, so there are at most two distinct possibilities for $\CT(\Gamma\bmod{(X-1)^{e_i}})$, depending on whether or not there exists a $k\in\{0,1,\ldots,2^n-1\}$ such that the constant part of $A_k$ is a unit in $V_k$. Now, no such $k$ exists if and only if for each $k$, the projection of $\chi_{\tilde{f}}(X)+((X-1)^{e_i})$ to $V_k$ has vanishing $\IF_2((X-1)^k+(X-1)^{e_i})$-coordinate, i.e., if and only if $(X-1)^{2^n}$ divides $\chi_{\tilde{f}}(X)=\chi_{\tilde{f}}(X)\bmod{(X-1)^{e_i}}$.

However, that divisibility cannot hold, because $\chi_{\tilde{f}}(X)$ is a nonzero polynomial in $\IF_2[X]$ of degree at most $2^n-1$. We conclude that there is at least one primary rational canonical block of the linear part of $\Gamma\bmod{(X-1)^{e_i}}$ on which the cycle type of the corresponding restricted affine map associated with $\Gamma\bmod{(X-1)^{e_i}}$ is as in statement (3) of Proposition \ref{prop 2.12}, i.e., all cycles on that block are of length $2^{\lfloor\log_2(a)\rfloor+1}$. Moreover, following the proof of Proposition \ref{prop 2.15}, that cycle length is actually a multiple of all cycle lengths occurring in such a block, whence the said block absorbs all other blocks. Hence, in fact, \emph{all} cycles of $\Gamma\bmod{(X-1)^{e_i}}$ are of length $2^{\lfloor\log_2(a)\rfloor+1}$, i.e.,
\[
\CT(\Gamma\bmod{(X-1)^{e_i}})
=
x_{2^{\lfloor\log_2(a)\rfloor+1}}^{2^{e_i-\lfloor\log_2(a)\rfloor-1}}.
\]
\end{subsubcase}

\begin{subsubcase}
Assume $a+1$ is a power of $2$. Then
\[
\{\lfloor\log_2(a_k)\rfloor: 0\leq k\leq 2^n-1\} = \{\log_2(a+1)-1, \log_2(a+1)\},
\]
so a priori, there are three possibilities for $\CT(\Gamma\bmod{(X-1)^{e_i}})$, corresponding to the following cases:
\begin{enumerate}
\item There is no $k\in\{0,1,\ldots,2^n-1\} $ at all such that the constant part of $A_k$ is a unit of $V_k$, i.e., $(X-1)^{2^n} \mid \chi_{\tilde{f}}(X)$. As already argued in the previous subsubcase, this is actually impossible.
\item There is a $k\in\{b,b+1,\ldots,2^n-1\}$, but no $k\in\{0,1,\ldots,b-1\}$, such that the constant part of $A_k$ is a unit of $V_k$, i.e., $(X-1)^b \mid \chi_{\tilde{f}}(X) $ but $(X-1)^{2^n} \nmid \chi_{\tilde{f}}(X)$; of course, that indivisibility is always satisfied. Then one of the smaller, $a$-dimensional primary Frobenius blocks of the linear part of $\Gamma\bmod{(X-1)^{e_i}}$ corresponds to a cycle type of the form $x_{2^{\lfloor\log_2(a)\rfloor+1}}^{2^{a-\lfloor\log_2(a)\rfloor-1}}=x_{a+1}^{2^a/(a+1)}$ and absorbs all other $a$-dimensional blocks. Moreover, by Proposition \ref{prop 2.12}(2), the cycle lengths on each $(a+1)$-dimensional block are powers of $2$, and the largest among them is $2^{\lceil\log_2(a+1)\rceil}=a+1$. Hence, the said $a$-dimensional block also absorbs all $(a+1)$-dimensional blocks, and we have
\[
\CT(\Gamma\bmod{(X-1)^{e_i}})=x_{a+1}^{2^{e_i}/(a+1)}.
\]
\item There is a $k\in\{0,1,\ldots,b-1\} $ such that the constant part of $A_k$ is a unit of $V_k$, i.e., $(X-1)^b \nmid \chi_{\tilde{f}}(X)$. Then one of the larger, $(a+1)$-dimensional primary rational canonical blocks of the linear part of $\Gamma\bmod{(X-1)^{e_i}}$ absorbs all other blocks, whence
\[
\CT(\Gamma\bmod{(x-1)^{e_i}})
=
x_{2^{\lfloor\log_2(a+1)\rfloor+1}}^{2^{e_i-\lfloor\log_2(a+1)\rfloor-1}}
=
x_{2a+2}^{2^{e_i-1}/(a+1)}.
\]
\end{enumerate}
\end{subsubcase}
\end{subcase}    
\end{case}

The following theorem summarizes the results of the above discussion.

\begin{theorem}\label{longTheo}
Let $n$ and $m$ be positive integers, and let $\tilde{f}\in\Sym(\IF_2^n)$ and $\tilde{g}\in\Sym(\IF_2^m)$ be transition functions of FSRs such that $\tilde{f}$ is a De Bruijn cycle and $\tilde{g}$ is linear. Let $P(X)\in\IF_2[X]$ be the unique degree $m$ monic polynomial such that $\tilde{g}$ can be represented, with respect to a suitable $\IF_2$-basis of $\IF_2^m$, by the companion matrix $\Comp(P(X))$. Write $P(X)=(X-1)^{e_0}\prod_{i=1}^r{Q_i(X)^{e_i}}$ with $e_0\in\IN_0$, $e_i\in\IN^+$ for $1\leq i\leq r$, and $X-1,Q_1(X),\ldots,Q_r(X)\in\IF_2[X]$ being pairwise distinct monic irreducible polynomials. Recall/take note of the following notations.
\begin{itemize}
\item $\pi_1:\IF_2^n\rightarrow\IF_2$ for the projection to the first coordinate;
\item $\chi_{\tilde{f}}(X):=\sum_{t=0}^{2^n-1}{\pi_1(\tilde{f}^t(\vec{0})X^{2^n-1-t}}\in\IF_2[X]$;
\item $\divideontimes$ for the Wei-Xu product of polynomials in $\IQ[x_n: n\geq1]$, defined in \cite[Definition 2.2]{Wei-xu} (see also the text passage before Lemma \ref{absorptionLem} above);
\item $\BU_{\ell}$, where $\ell$ is a positive integer, for the unique $\IQ$-algebra endomorphism of $\IQ[x_n: n\geq1]$ such that $\BU_{\ell}(x_n)=x_{\ell\cdot n}$ for all positive integer $n$;
\item $\ite_t(\gamma)$ for the cycle type of the $t$-th iterate of any permutation with cycle type $\gamma$ (see also formula (\ref{itEq}) above);
\item $\Gamma\in\Sym(\IF_2[X]/(P(X)))$, $\Gamma(R(X)+(P(X)))=X^{2^n}R(X)+\chi_{\tilde{f}}(X)+(P(X))$;
\item $\Gamma_0$ and $\Gamma_+$ for the reductions of $\Gamma$ modulo $(X-1)^{e_0}$ and $\prod_{i=1}^r{Q_i(X)^{e_i}}$, respectively;
\item $\alpha_0$ for the multiplication by $X$ modulo $(X-1)^{e_0}$. The cycle type of $\alpha_0$ is described explicitly in Proposition \ref{prop 2.12}(2);
\item for $i=1,2,\ldots,r$: $\alpha_i$ for the multiplication by $X$ modulo $Q_i(X)^{e_i}$. The cycle type of $\alpha_i$ is described explicitly in Proposition \ref{prop 2.12}(1);
\item $\alpha_+$ for the multiplication by $X$ modulo $\prod_{i=1}^r{Q_i(X)^{e_i}}$, which satisfies $\CT(\alpha_+)=\divideontimes_{i=1}^r{\CT(\alpha_i)}$;
\item $e_0=a\cdot 2^n+b$ with $a,b\in\IZ$, $0\leq b<2^n$.
\end{itemize}
The following hold.
\begin{enumerate}
\item $\CT(\tilde{f}\ast\tilde{g})=\BU_{2^n}(\CT(\Gamma))$.
\item $\CT(\Gamma)=\CT(\Gamma_0)\divideontimes\CT(\Gamma_+)$.
\item $\CT(\Gamma_+)=\CT(\alpha_+^{2^n})=\ite_{2^n}(\CT(\alpha_+))$.
\item
\[
\CT(\Gamma_0)=
\begin{cases}
x_1, & \text{if }e_0=0; \\
x_1^{2^{e_0}}, & \text{if }e_0>0, n\geq\lceil\log_2(e_0)\rceil,\text{ and }(X-1)^{e_0}\mid\chi_{\tilde{f}}(X); \\
x_2^{2^{e_0-1}}, & \text{if }e_0>0, n\geq\lceil\log_2(e_0)\rceil,\text{ and }(X-1)^{e_0}\nmid\chi_{\tilde{f}}(X); \\
x_{2^{\lfloor\log_2(a)\rfloor+1}}^{2^{e_0-\lfloor\log_2(a)\rfloor-1}}, & \text{if }e_0>0, n<\lceil\log_2(e_0)\rceil, \text{ and }\log_2(a+1)\notin\IZ; \\
x_{a+1}^{2^{e_0}/(a+1)}, & \text{if }e_0>0, n<\lceil\log_2(e_0)\rceil, \log_2(a+1)\in\IZ, \text{ and }(X-1)^b\mid\chi_f(X); \\
x_{2a+2}^{2^{e_0-1}/(a+1)}, & \text{if }e_0>0, n<\lceil\log_2(e_0)\rceil, \log_2(a+1)\in\IZ,\text{ and }(X-1)^b\nmid\chi_f(X).
\end{cases}
\]
\end{enumerate}
\end{theorem}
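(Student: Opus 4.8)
The plan is to observe that this theorem is the bookkeeping summary of the case analysis already carried out in the text preceding it, so the proof amounts to assembling those computations and checking that each branch of the piecewise formula matches the subcase that produces it. I would organize the verification statement by statement.

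For statement (1), I would invoke Proposition~\ref{prop 2.10} to write $\tilde{f}\ast\tilde{g}$ as the wreath product element $\tilde{f}\cdot(\tilde{g}\rho(\vec{t})^{\pi_1(\vec{y})})_{\vec{y}\in\IF_2^n}$ and then apply Remark~\ref{remark 2.6}. Since $\tilde{f}$ is a De Bruijn cycle, the block permutation $\tilde{f}$ is a single cycle of length $2^n$; hence there is exactly one associated forward cycle product $\phi$, its cycle type gets blown up by the factor $2^n$, and $\CT(\tilde{f}\ast\tilde{g})=\BU_{2^n}(\CT(\phi))$. Transporting $\phi$ through the $\IF_2$-linear isomorphism $\IF_2^m\to\IF_2[X]/(P(X))$ sending $\tilde{g}^k(\vec{t})\mapsto X^k+(P(X))$ identifies $\phi$ (up to conjugacy, hence without changing cycle type) with the affine permutation $\Gamma$, giving $\CT(\tilde{f}\ast\tilde{g})=\BU_{2^n}(\CT(\Gamma))$.

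For statements (2) and (3), I would use the Chinese Remainder isomorphism $\IF_2[X]/(P(X))\to\prod_i\IF_2[X]/(Q_i(X)^{e_i})$, under which $\Gamma$ becomes the component-wise product of its reductions $\Gamma\bmod Q_i^{e_i}$. Grouping the factor $(X-1)^{e_0}$ into $\Gamma_0$ and the remaining factors into $\Gamma_+$, and recalling that a component-wise product of permutations has cycle type equal to the $\divideontimes$-product of the factors' cycle types, yields statement (2). Statement (3) is precisely Case~\ref{case 1}: for each $Q_i\neq X-1$ one has $\pow_{2^n}(Q_i)\neq X-1$ (because raising to the $2^n$-th power is a field automorphism and so preserves multiplicative order), so Proposition~\ref{prop 2.12} shows the translation part of $\Gamma\bmod Q_i^{e_i}$ does not affect the cycle type; thus $\CT(\Gamma\bmod Q_i^{e_i})=\CT(\alpha_i^{2^n})$, and taking the $\divideontimes$-product over $i$ and using formula~(\ref{itEq}) gives $\CT(\Gamma_+)=\CT(\alpha_+^{2^n})=\ite_{2^n}(\CT(\alpha_+))$.

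The bulk of the work, and the main obstacle, is statement (4), which is the unipotent Case~\ref{case 2}. Here I would match each branch of the displayed formula to the subcase that produces it: the $e_0=0$ branch is the degenerate ring $\{0\}$; the two branches with $n\geq\lceil\log_2(e_0)\rceil$ are Subcase~\ref{subcase 1}, where $\ord((X-1)^{e_0})\mid 2^n$ forces the linear part to be the identity so that $\Gamma_0$ is a translation of order $1$ or $2$ according to whether $(X-1)^{e_0}\mid\chi_{\tilde{f}}(X)$; and the three branches with $n<\lceil\log_2(e_0)\rceil$ are Subcase~\ref{subcase 2.2}. For the latter I would deploy Lemma~\ref{lemma 2.18}(2) for the block decomposition, Proposition~\ref{prop 2.12}(2,3) for the per-block cycle types, and Proposition~\ref{prop 2.15} together with the absorption phenomenon to show that one distinguished block dictates the whole cycle type. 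The delicate points---where I expect the real difficulty to lie---are: (i) translating \enquote{the constant part of $A_k$ is an $\alpha_k$-unit} into the clean divisibility conditions $(X-1)^{2^n}\mid\chi_{\tilde{f}}(X)$ and $(X-1)^{b}\mid\chi_{\tilde{f}}(X)$ via the explicit description of the non-unit subspace $W_k\subseteq V_k$; (ii) using that $\chi_{\tilde{f}}(X)$ is nonzero of degree $<2^n$ to rule out $(X-1)^{2^n}\mid\chi_{\tilde{f}}(X)$, thereby guaranteeing a unit block always exists; and (iii) getting the exponents right by following the proof of Proposition~\ref{prop 2.15}, i.e.\ checking in each subsubcase (according to whether $a+1$ is a power of $2$) which block carries the longest $2$-power cycle and hence absorbs the others, and converting $2^{\lfloor\log_2(a)\rfloor+1}$ and $2^{\lfloor\log_2(a+1)\rfloor+1}$ into the closed forms $a+1$ and $2a+2$ when $a+1$ is a power of $2$. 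Assembling these gives the six-way formula for $\CT(\Gamma_0)$ and completes the proof.
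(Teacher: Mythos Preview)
Your proposal is correct and follows essentially the same approach as the paper: the theorem is explicitly stated there as a summary of the preceding discussion, and your statement-by-statement breakdown (Proposition~\ref{prop 2.10} and Remark~\ref{remark 2.6} for (1), the Chinese Remainder Theorem and Case~\ref{case 1} for (2)--(3), and the Subcase~\ref{subcase 1}/\ref{subcase 2.2} analysis with the absorption argument for (4)) mirrors exactly the structure of that discussion, including the identification of the delicate points (the translation of unit conditions into divisibility by powers of $X-1$, the use of $\deg\chi_{\tilde f}<2^n$ to exclude one branch, and the conversion of $2^{\lfloor\log_2(a)\rfloor+1}$ and $2^{\lfloor\log_2(a+1)\rfloor+1}$ to closed forms when $a+1$ is a power of~$2$).
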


In addition to the general method of describing the cycle structure of a cascaded connection of FSRs due to Mykkeltveit-Siu-Tong \cite[Theorme 2.1]{Mykkeltveit}, Chang-Gong-Wang \cite{Chang} focused on cascaded connections $\FSR(f;g)$ where $f$ is a De Bruijn cycle and $g$ is a linear permutation (the same situation as in Theorem \ref{longTheo}). Their general method, represented by \cite[Theorem 4]{Chang}, consists of reducing the determination of the cycle structure of the transition function of $\FSR(f;g)$ to the solution of a certain system of linear equations over $\IF_2$. While this provides an efficient method to determine the cycle structure for each given cascaded connection, it does not lead to explicit formulas as in our Theorem \ref{longTheo}.

Finally, we note the following theorem as a consequence of Theorem \ref{longTheo}. For polynomials $P(X),Q(X)\in\IF_2[X]$ where $Q(X)$ is irreducible, we denote by $\nu_{Q(X)}(P(X))$ the $Q(X)$-adic valuation of $P(X)$, i.e., the largest nonnegative integer $v$ such that $Q(X)^v$ divides $P(X)$ (and $\nu_{Q(X)}(0):=\infty$).

\begin{theorem}\label{shortTheo}
Let $n$ and $m$ be positive integers, and let $\tilde{f}\in\Sym(\IF_2^n)$ and $\tilde{g}\in\Sym(\IF_2^m)$ be transition functions of FSRs such that $\tilde{f}$ is a De Bruijn cycle and $\tilde{g}$ is linear. Let $P(X)\in\IF_2[X]$ be the unique degree $m$ monic polynomial such that $\tilde{g}$ can be represented, with respect to a suitable $\IF_2$-basis of $\IF_2^m$, by the companion matrix $\Comp(P(X))$.
\begin{enumerate}
\item If $\nu_{X-1}(P(X))\leq 1$ and $n>1$, then $\CT(\tilde{f}\ast\tilde{g}) = \BU_{2^n}(\CT(\tilde{g}^{2^n}))$.
\item If $\tilde{g}$ is of odd order and $n>1$, then $\CT(\tilde{f}\ast\tilde{g}) = \BU_{2^n}(\CT(g)) $.
\end{enumerate}
\end{theorem}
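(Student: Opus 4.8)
The plan is to obtain both statements directly from Theorem~\ref{longTheo}, which already determines $\CT(\tilde f\ast\tilde g)$ completely. The backbone is the identity $\CT(\tilde f\ast\tilde g)=\BU_{2^n}(\CT(\Gamma))$ from part~(1) of that theorem, so everything reduces to showing that, under the present hypotheses, $\CT(\Gamma)$ collapses to $\CT(\tilde g^{2^n})$. I will repeatedly use part~(2) there, $\CT(\Gamma)=\CT(\Gamma_0)\divideontimes\CT(\Gamma_+)$, and part~(3), $\CT(\Gamma_+)=\CT(\alpha_+^{2^n})$, and I will compare them with the Chinese Remainder decomposition of $\tilde g^{2^n}$ itself: since $\tilde g$ is multiplication by $X$ modulo $P(X)=(X-1)^{e_0}\prod_{i=1}^r Q_i(X)^{e_i}$, the permutation $\tilde g^{2^n}$ is the component-wise product over the CRT factors, its restriction to the $\prod_i Q_i^{e_i}$-part is exactly $\alpha_+^{2^n}$, and hence $\CT(\tilde g^{2^n})=\CT(\alpha_0^{2^n})\divideontimes\CT(\Gamma_+)$, where $\alpha_0$ is multiplication by $X$ modulo $(X-1)^{e_0}$.

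For statement~(1) the hypothesis $\nu_{X-1}(P)\le1$ means $e_0\in\{0,1\}$, and I would dispose of both values at once. When $e_0=0$ there is no unipotent factor: $\CT(\Gamma_0)=x_1=\CT(\alpha_0^{2^n})$, and a one-point factor $x_1$ leaves a Wei--Xu product unchanged by Lemma~\ref{absorptionLem}, so both $\CT(\Gamma)$ and $\CT(\tilde g^{2^n})$ equal $\CT(\Gamma_+)$. When $e_0=1$, $\alpha_0$ is multiplication by $X\equiv1$ on the two-element ring $\IF_2[X]/(X-1)$, hence the identity, so $\CT(\alpha_0^{2^n})=x_1^2$; on the other side, $n>1$ gives $n\ge\lceil\log_2 e_0\rceil$ and, by property~(2) of $\chi_{\tilde f}$ recorded in Case~\ref{case 2}, also $(X-1)\mid\chi_{\tilde f}(X)$, so the second branch of Theorem~\ref{longTheo}(4) yields $\CT(\Gamma_0)=x_1^{2^{e_0}}=x_1^2$ as well. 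Since the all-fixed-points factor $x_1^2$ satisfies the hypothesis of Lemma~\ref{absorptionLem}, both Wei--Xu products reduce to $\CT(\Gamma_+)^2$. In either case $\CT(\Gamma)=\CT(\tilde g^{2^n})$, and applying $\BU_{2^n}$ finishes statement~(1).

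For statement~(2) I would first check that odd order forces the hypothesis of~(1). Indeed $\ord(\tilde g)=\ord(P)$ is the least common multiple of $2^{\lceil\log_2 e_0\rceil}=\ord((X-1)^{e_0})$ and the $\ord(Q_i^{e_i})=\ord(Q_i)\cdot2^{\lceil\log_2 e_i\rceil}$ for $1\le i\le r$; each $\ord(Q_i)$ is odd, so oddness of $\ord(\tilde g)$ forces $\lceil\log_2 e_i\rceil=0$ for every $i$, in particular $e_0\le1$. Thus $\nu_{X-1}(P)\le1$ and part~(1) applies, giving $\CT(\tilde f\ast\tilde g)=\BU_{2^n}(\CT(\tilde g^{2^n}))$. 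It remains to replace $\CT(\tilde g^{2^n})$ by $\CT(\tilde g)$: as $\ord(\tilde g)$ is odd it is coprime to $2^n$, so every cycle length $\ell$ of $\tilde g$ (which divides $\ord(\tilde g)$) satisfies $\gcd(\ell,2^n)=1$, and then formula~(\ref{itEq}) shows $\ite_{2^n}$ fixes $\CT(\tilde g)$; hence $\CT(\tilde g^{2^n})=\CT(\tilde g)$ and statement~(2) follows.

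The one genuinely delicate point is the role of $n>1$ in the case $e_0=1$ of statement~(1): it is exactly what guarantees $(X-1)\mid\chi_{\tilde f}(X)$, so that the unipotent block $\Gamma_0$ consists solely of fixed points and mirrors the trivial action of $\tilde g^{2^n}$ modulo $(X-1)$. For $n=1$ that block would instead contribute $2$-cycles, the factor $x_1^2$ would be replaced by $x_2$, the absorption lemma would no longer apply, and the clean identification $\CT(\Gamma)=\CT(\tilde g^{2^n})$ would break down. Everything else is routine bookkeeping with the Wei--Xu product through Lemma~\ref{absorptionLem}.
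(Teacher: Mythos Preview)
Your proof is correct and follows essentially the same route as the paper: both arguments reduce to showing $\CT(\Gamma_0)=\CT(\alpha_0^{2^n})$ via the second branch of Theorem~\ref{longTheo}(4) (using $n>1$ to get $(X-1)\mid\chi_{\tilde f}$ when $e_0=1$), then combine with $\CT(\Gamma_+)=\CT(\alpha_+^{2^n})$ and the CRT identification of $\tilde g$ with $\alpha_0\times\alpha_+$. The only cosmetic difference is that you invoke Lemma~\ref{absorptionLem} to evaluate the Wei--Xu products explicitly, whereas the paper simply notes $\CT(\alpha_0^{2^n})\divideontimes\CT(\alpha_+^{2^n})=\CT((\alpha_0\times\alpha_+)^{2^n})=\CT(\tilde g^{2^n})$ directly; your extra step is harmless but not needed once $\CT(\Gamma_0)=\CT(\alpha_0^{2^n})$ is established.
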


\begin{proof}
For statement (1): Let us use the same notations as in Theorem \ref{longTheo}. By that theorem, we have
\begin{equation}\label{ctEq}
\CT(\tilde{f}\ast\tilde{g})=\BU_{2^n}(\CT(\Gamma))=\BU_{2^n}(\CT(\Gamma_0)\divideontimes\CT(\Gamma_+))=\BU_{2^n}(\CT(\Gamma_0)\divideontimes\CT(\alpha_+^{2^n})).
\end{equation}
We argue that under our assumptions here, we have $\CT(\Gamma_0)=\CT(\alpha_0^{2^n})$. Indeed, note that $e_0\in\{0,1\}$. If $e_0=0$, then
\[
\CT(\Gamma_0)=x_1=\CT(\id_{\IF_2^0})=\CT(\id_{\IF_2^0}^{2^n})=\CT(\alpha_0^{2^n}).
\]
And if $e_0=1$, then we have $e_0>0$, $n\geq\lceil\log_2(e_0)\rceil$ and $(X-1)^{e_0}\mid\chi_f(X)$ (the latter by comment (2) after the definition of $\chi_f(X)$ above). Hence,
\[
\CT(\Gamma_0)=x_1^2=\CT(\id_{\IF_2})=\CT(\id_{\IF_2}^{2^n})=\CT(\alpha_0^{2^n}).
\]
This concludes the proof of the claim that $\CT(\Gamma_0)=\CT(\alpha_0^{2^n})$. Combining this with formula (\ref{ctEq}), we infer that
\begin{align*}
\CT(\tilde{f}\ast\tilde{g})&=\BU_{2^n}(\CT(\alpha_0^{2^n})\divideontimes\CT(\alpha_+^{2^n}))=\BU_{2^n}(\CT(\alpha_0^{2^n}\times\alpha_+^{2^n}))=\BU_{2^n}(\CT((\alpha_0\times\alpha_+)^{2^n})) \\
&=\BU_{2^n}(\CT(\tilde{g}^{2^n})).
\end{align*}
Here, the last equality uses the observation that because $\tilde{g}$ can be represented by $\Comp(P(X))$, its mapping behavior on $\IF_2^m$ corresponds, under a suitable $\IF_2$-vector space isomorphism, to that of the multiplication by $X$ modulo $P(X)$, which in turn corresponds (via the Chinese Remainder Theorem) to the component-wise modular multiplication by $X$ modulo $(X-1)^{e_0}$ (i.e., $\alpha_0$) and modulo $\prod_{i=1}^r{Q_i(X)^{e_i}}$ (i.e, $\alpha_+$), respectively.

Statement (2) follows as a special case from statement (1). Indeed, $\tilde{g}$ being of odd order (i.e., of order coprime to the characteristic $2$) is equivalent to its characteristic polynomial $P(X)$ being square-free. In particular, we have $\nu_{X-1}(P(X))\leq 1$, whence $\CT(\tilde{f}\ast\tilde{g})=\BU_{2^n}(\CT(\tilde{g}^{2^n}))$ by statement (1). Moreover, because $\tilde{g}$ is of odd order, we have
\[
\CT(\tilde{g}^{2^n})=\ite_{2^n}(\CT(\tilde{g}))=\CT(\tilde{g})
\]
by the formula for $\ite_t(\gamma)$ from above, which concludes the proof.
\end{proof}

We note that \cite[Corollary 2]{Chang} is a result that is closely related to, but essentially weaker than, our Theorem \ref{shortTheo}; please note that in contrast to that result, we do not require the assumption that $2^n\geq m$, only that $n>1$.

\section{Examples}\label{examples}
Throughout this section, we provide some examples of calculating the cycle type of $\tilde{f}\ast\tilde{g}$ to clarify our method.

\begin{example}\label{ex5.1}
Let $n=2$, $m=3$, and our feedback shift registers be given by the Boolean functions
\[
f_1(y_0,y_1)=y_0\oplus1\text{ and }g_1(x_0,x_1,x_2)=x_0\oplus x_1 .
\]
Now, let $a_{k+2}=a_k+1$, where $k\in\mathbb{N}^{+} $, be the recurrence relation corresponding to $f_1$, $b_{k+3}=b_k+b_{k+1}$, where $k\in\mathbb{N}^{+} $ be the recurrence relation corresponding to $g_1$, and $c_{k+3}=c_k+c_{k+1}+a_k$ the one corresponding to $\FSR(f;g)$.

Let the sequence generated by $\FSR(f)$ be $\underline{a}$, which is, up to cyclic shifts, $ \underline{a} = (0,0,1,1) $, a De Bruijn sequence. Now we calculate the sequences $\underline{c}$ generated by $\FSR(f;g)$:\\
    
    \begin{center}
    \begin{tabularx}{0.6\textwidth} { 
  | >{\raggedright\arraybackslash}X 
  | >{\centering\arraybackslash}X 
  | >{\raggedleft\arraybackslash}X | }
 \hline
 $\underline{a}$:~0011~0011~0011~0011~0011~0011~0011~0011~$\cdots$ \\
 \hline
 \underline{c}:~\textcolor{red}{0000}~0111~0101~1011~1110~0010~1001~\textcolor{red}{0000}~$\cdots$  \\
\hline
\underline{c}:~ 1100~1100~1100~$\cdots$\\
\hline
\end{tabularx}\newline 
\end{center}

As can be seen in the above table, $\FSR(f;g)$ produces exactly two distinct sequences up to cyclic shifts. The first one is of length $28$, and the second one is of length $4$. Hence, $\CT(\tilde{f}\ast\tilde{g})=x_4x_{28}$.

Now we use our method to find the cycle type of $\tilde{f}\ast\tilde{g}$. First, we note that $\tilde{f}:\IF_2^2 \rightarrow \IF_2^2 $ is defined by $ 
(y_1,y_2)^T \mapsto (y_2, f_1(y_0,y_1))^T $, and $ \tilde{g}: \IF_2^3 \rightarrow \mathbb{F}_2^3 $, is defined by $ (x_0,x_1,x_2)^T \mapsto (x_1,x_2,g_1(x_0,x_1,x_2))^T $. Also, we know, by Proposition~\ref{prop 2.10}, that $\tilde{f}\ast\tilde{g}$ can be viewed as the wreath product element $ \tilde{f}\cdot(\tilde{g}\rho(\Vec{t})^{\pi_1(\Vec{y})})_{\Vec{y}\in \IF_2^2} $. Let $\zeta$ be the following De Bruijn cycle corresponding to $\tilde{f}$.
\begin{center}    
\begin{tikzpicture}[node distance=2cm]
\node (X_1) {$(0,0)^T$};
\node (X_2) [right of=X_1] {$(0,1)^T$};
\node (X_3) [below of=X_2] {$(1,1)^T$};
\node (X_4) [left of=X_3] {$(1,0)^T$};
\node (end) [above of=X_4] {$(0,0)^T$};
\draw [->] (X_1) -- (X_2) ;
\node[text width=0.05cm,fill=white] at (0.95,0.28) {\tiny $f$};
\draw [->] (X_2) -- (X_3);
\node[text width=0.1cm,fill=white] at (2.2,-0.9) {\tiny $f$};
\draw [->] (X_3) -- (X_4);
\node[text width=0.1cm,fill=white] at (1,-1.7) {\tiny $f$};
\draw [->] (X_4) -- (end);
\node[text width=0.1cm,fill=white] at (-0.3,-0.9) {\tiny $f$};
\end{tikzpicture}
\end{center}
Now, by Remark~\ref{remark 2.6}, we have
\[
\CT(\tilde{f}\ast\tilde{g}) = \CT(\tilde{f}\cdot(\tilde{g}\rho(\Vec{t})^{\pi_1(\Vec{y})})_{\Vec{y}\in\IF_2^2}) = \BU_4(\CT(\fcp_{\zeta,(0,0)^T}(\tilde{f}(\tilde{g}\rho(\Vec{t})^{\pi_1(\Vec{y})})_{\Vec{y}\in \IF_2^2}))).
\]
Moreover,
\begin{align*}
\fcp_{\zeta,(0,0)^T}(\tilde{f}(\tilde{g}\rho(\Vec{t})^{\pi_1(\Vec{y})})_{\Vec{y}\in \IF_2^2}) &= \tilde{g}\rho(\Vec{t})^{\pi_1((0,0)^T)}\cdot \tilde{g}\rho(\Vec{t})^{\pi_1((0,1)^T)}\cdot \tilde{g}\rho(\Vec{t})^{\pi_1((1,1)^T)}\cdot \tilde{g}\rho(\Vec{t})^{\pi_1((1,0)^T)} \\&= \tilde{g}\cdot\tilde{g}\cdot \tilde{g}\rho(\Vec{t})\cdot\tilde{g}\rho(\Vec{t}) \\&= \tilde{g}^3\rho(\Vec{t})\tilde{g}\rho(\Vec{t}) = \tilde{g}^4\rho(\tilde{g}(\Vec{t})+\Vec{t}).
\end{align*}
Therefore, we intend to find $\CT(\tilde{g}^4\rho(\tilde{g}(\Vec{t})+\Vec{t}))=\CT(\Gamma)$, using the notation of Theorem \ref{longTheo}.

The representation matrix of $\tilde{g}$ with respect to the $\IF_2 $-basis $ \tilde{g}^k(\Vec{t}) $ for $ k = 0,1,2 $ is $\Comp(X^3+X+1)$. Note that since $ X^3+X+1 $ is irreducible, and $ X^3+X+1 \neq X+1 $, \textbf{Case~\ref{case 1}} of our discussion leading to Theorem \ref{longTheo} applies, i.e., $\Gamma=\Gamma_+$ in the notation of Theorem \ref{longTheo}. An application of that theorem thus yields that
\[
\CT(\Gamma)=\CT(\Gamma_+)=\ite_4(\CT(\alpha_+)),
\]
where $\alpha_+$ is the multiplication by $X$ modulo $X^3+X+1$ (the product of all irreducible factors of $P(X)=X^3+X+1$ that are distinct from $X-1$, taken with multiplicity). Because $X^3+X+1$ itself is irreducible, we may read $\CT(\alpha_+)$ off directly from Proposition \ref{prop 2.12}. Specifically, statement (1) of Proposition \ref{prop 2.12} with $q=2$, $e=1$ and using that $\ord(X^3+X+1)=7$ yields $\CT(\alpha_+)=x_1x_7^{(2^3-1)/7}=x_1x_7$. We conclude that
\[
\CT(\Gamma)=\ite_4(x_1x_7)=x_{1/\gcd(1,4)}^{1\cdot\gcd(1,4)}x_{7/\gcd(7,4)}^{1\cdot\gcd(7,4)}=x_1x_7
\]
and, consequently,
\[
\CT(\tilde{f}\ast\tilde{g})=\BU_4(x_1x_7)=x_4x_{28}.
\]
\end{example}

\begin{example}\label{ex.5.2}
Let $n=2$ and $f_1(y_0,y_1)=y_0\oplus 1$ as in Example~\ref{ex5.1}, but in this example, let $m=5$ and $g_1(x_0,x_1,x_2,x_3,x_4)=x_0\oplus x_1\oplus x_2$. Let $ a_{k+2}=a_k+1$ be the recurrence relation corresponding to $f_1$, and $b_{k+5}=b_k+b_{k+1}+b_{k+2}$ be the recurrence relation corresponding to $g_1$. Moreover, let $c_{k+5}=c_{k}+c_{k+1}+c_{k+2}+a_k$ be the recurrence relation corresponding to $\FSR(f;g)$, where $k\in\IN^+$. Like in the previous example, one can directly calculate the sequences generated by $\FSR(f;g)$. It generates the following four sequences, two of length 56 and two of length 8:
\begin{align*}
&\underline{c}_1 = [0, 0, 0, 0, 0, 0, 0, 1, 1, 0, 1, 1, 1, 0, 0, 0, 1, 1, 0, 0, 1, 0, 1, 0, 0, 0, 1, 0, 1, 1, 1, 1, \\&1, 1, 1, 0, 0, 1, 0, 0, 0, 1, 1, 1, 0, 0, 1, 1, 0, 1, 0, 1, 1, 1, 0, 1]\\&
\underline{c}_2 = [0, 0, 0, 0, 1, 0, 0, 0, 0, 1, 0, 1, 0, 1, 0, 0, 1, 1, 1, 0, 1, 1, 0, 1, 1, 0, 0, 1, 1, 1, 1, 1, \\&0, 1, 1, 1, 1, 0, 1, 0, 1, 0, 1, 1, 0, 0, 0, 1, 0, 0, 1, 0, 0, 1, 1, 0]\\&
\underline{c}_3 = [0, 0, 1, 1, 1, 1, 0, 0]\\&
\underline{c}_4 = [1, 0, 0, 1, 0, 1, 1, 0].
\end{align*}
Hence, $\CT(\tilde{f}\ast\tilde{g})=x_8^2x_{56}^2$.

Now we use our method to confirm the cycle type of $\tilde{f}\ast\tilde{g}$. Note that $\tilde{f}$ is the same as in Example~\ref{ex5.1}, and $\tilde{g}: \IF_2^5 \rightarrow \IF_2^5 $, is defined by
\[
(x_0,x_1,x_2,x_3,x_4)^T \mapsto (x_1,x_2,x_3,x_4,g_1(x_0,x_1,x_2,x_3,x_4))^T.
\]
Since $\tilde{f}$ is the same as in Example~\ref{ex5.1}, with the same argument we need to find $\CT(\tilde{g}^4\rho(\tilde{g}(\Vec{t})+\Vec{t}))=\CT(\Gamma)$; then $\CT(\tilde{f}\ast\tilde{g})$ is the $4$-blow-up of that.

Note that the representation matrix of $\tilde{g}$ with respect to the $\IF_2 $-basis $\tilde{g}^{k}(\Vec{t}) $ for $ k = 0,1,2,3,4 $ is $ \textnormal{Comp}(P(X))$ with $P(X)=X^5+X^2+X+1$. This time, $P(X)$ is not irreducible, but rather, it admits the factorization $P(X)=(X^3+X+1)(X+1)^2$. Because there are two distinct irreducible factors, we need to compute the cycle type of $\Gamma$ modulo each irreducible power, then take the Wei-Xu product of those two cycle types to obtain $\CT(\Gamma)$.

Now, the reduction $\Gamma\bmod{X^3+X+1}=\Gamma_+$ is the $\Gamma$ of the previous example, which lets us conclude without further calculations that $\CT(\Gamma_+)=x_1x_7$.

As for $\Gamma\bmod{(X+1)^2}=\Gamma_0$, we use statement (4) of Theorem \ref{longTheo} to compute it. Note that $e_0=2>0$, that $n=2\geq 1=\lceil\log_2(e_0)\rceil$ (which means that we are in Subcase 2.1 of the discussion leading to Theorem \ref{longTheo}) and that
\[
\chi_{\tilde{f}}(X)=\pi_1((0,0)^T)X^3+\pi_1((0,1)^T)X^2+\pi_1((1,1)^T)X+\pi_1((1,0)^T)=X+1
\]
is \emph{not} divisible by $(X+1)^{e_0}=(X+1)^2$. Therefore, the third case in statement (4) of Theorem \ref{longTheo} applies, letting us conclude that $\CT(\Gamma_0)=x_2^2$.

An application of statement (2) of Theorem \ref{longTheo} now yields that
\[
\CT(\Gamma)=\CT(\Gamma_0)\divideontimes\CT(\Gamma_+)=x_2^2\divideontimes(x_1x_7)=x_{\lcm(2,1)}^{2\cdot1\cdot\gcd(2,1)}x_{\lcm(2,7)}^{2\cdot1\cdot\gcd(2,7)}=x_2^2x_{14}^2.
\]
Finally, $\CT(\tilde{f}\ast\tilde{g})=\BU_4(\CT(\Gamma))=x_8^2x_{56}^2$.
\end{example}

\begin{example}
Let $ n = 2 $, $ f_1(y_0,y_1) = y_0\oplus 1 $ as in Example~\ref{ex5.1}, $ m = 8 $, and
\[
g_1(x_0,x_1,\ldots,x_7) = x_0 \oplus x_2\oplus x_3\oplus x_6\oplus x_7.
\]
By using a computer, we calculated the output sequences of $\FSR(f;g)$ directly. It generates 16 sequences of length $ 56 $, and 16 sequences of length $ 8 $. Hence, $\CT(\tilde{f}\ast\tilde{g}) = x_8^{16}x_{56}^{16}$.

Note that $\tilde{f}$ is the same as in Example~\ref{ex5.1}, and $\tilde{g}: \IF_2^8 \rightarrow \IF_2^8 $, is defined by
\[
(x_0,x_1,\ldots,x_7)^T \mapsto (x_1,\ldots,x_7,g_1(x_0,x_1,\ldots,x_7))^T.
\]
Since $\tilde{f}$ is the same as in Example~\ref{ex5.1}, with the same argument we need to find $\CT(\tilde{g}^4\rho(\tilde{g}(\Vec{t})+\Vec{t}))=\CT(\Gamma)$. 

Note that the representation matrix of $\tilde{g}$ with respect to the $\IF_2 $-basis $\tilde{g}^{k}(\Vec{t})$ for $k=0,1,\ldots,7$ is $\Comp(P(X))$ with $P(X)=X^8+X^7+X^6+X^3+X^2+1$. The factorization of $P(X)$ into powers of irreducible polynomials is $P(X)=(X^3+X+1)(X+1)^5$. Hence, $\Gamma_+=\Gamma\bmod{X^3+X+1}$, which is the $\Gamma_+$ of the previous example (and the $\Gamma$ of the first example), and we conclude immediately that $\CT(\Gamma_+)=x_1x_7$.

On the other hand, $\Gamma_0=\Gamma\bmod{(X+1)^5}$. We have $e_0=5$, and thus $n=2<3=\lceil\log_2(e_0)\rceil$, so we are in Subcase 2.2 of the discussion leading to Theorem \ref{longTheo}. Because $\tilde{f}$ has not changed from the previous example, we still have $\chi_{\tilde{f}}=X+1$. Integer division of $e_0=5$ by $2^n=4$ yields $5=1\cdot 4+1$, so $a=1$ and $b=1$. Because $\log_2(a+1)=\log_2(2)=1\in\IZ$ and $(X+1)^b=(X+1)^1\mid\chi_{\tilde{f}}(X)$, the penultimate case in statement (4) of Theorem \ref{longTheo} applies, letting us conclude that $\CT(\Gamma_0)=x_{a+1}^{2^{e_0}/(a+1)}=x_2^{16}$.

It follows that
\[
\CT(\Gamma)=\CT(\Gamma_0)\divideontimes\CT(\Gamma_+)=x_2^{16}\divideontimes(x_1x_7)=x_{\lcm(2,1)}^{16\cdot1\cdot\gcd(2,1)}x_{\lcm(2,7)}^{16\cdot1\cdot\gcd(2,7)}=x_2^{16}x_{14}^{16}
\]
and, finally,
\[
\CT(\tilde{f}\ast\tilde{g})=\BU_4(\CT(\Gamma))=x_8^{16}x_{56}^{16},
\]
confirming our direct calculations.
\end{example}

\section{Concluding remarks}\label{concRem}

We conclude this paper with two related open problems for further research.

In the paragraph after Theorem \ref{longTheo}, we compared our results on cycle types with Chang-Gong-Wang's method from \cite[Theorem 4]{Chang}, observing that their result does not lead to explicit formulas for the cycle types as our Theorem \ref{longTheo}. An advantage which Chang-Gong-Wang's method does have over ours at the moment is that it allows one to find, in each specific example, explicit representatives for the cycles of the transition function $\tilde{f}\ast\tilde{g}$, and thus explicit bit values with which the stages of the cascaded connection may be initialized to achieve a certain cycle length.

However, the wreath product method should allow one to achieve this as well, in the form of a general, parametric description of those cycle representatives. In fact, one should just be able to use an approach analogous to the one outlined in \cite[Section 3.1]{AQD}, which is for a different class of functions that can also be viewed as wreath product elements. This reduces the problem of finding cycle representatives for $\tilde{f}\ast\tilde{g}$ to that of finding such representatives for a certain affine permutation of $\IF_2^m$. We thus pose the following open problem, which we believe to be solvable using linear algebra:

\begin{problem}\label{openProb1}
Let $q$ be a prime power, $m$ a positive integer, $M$ an invertible $(m\times m)$-matrix over $\IF_q$, and $\vec{v}\in\IF_q^m$. In terms of $M$ and $\vec{v}$, give an explicit description of a set of representatives for the cycles of the affine permutation $A:\vec{x}\mapsto M\vec{x}+\vec{v}$ of $\IF_q^m$.
\end{problem}

We note two more things concerning Problem \ref{openProb1}. Firstly, for our application to FSRs, it would suffice to solve this for $q=2$. Secondly, in the desired explicit description, ideally each cycle representative is linked to the length of the associated cycle. That is, one should strive to find an explicit CRL-list for $A$ in the sense of \cite[Definition 1.2]{AQD}.

To motivate our second open problem, we note that according to Theorem \ref{longTheo}(4), the cycle type of $\Gamma_0$ often depends on whether a divisibility of the polynomial $\chi_{\tilde{f}}(X)$ by a certain power of $X-1$ holds. For that reason, understanding $\nu_{X-1}(\chi_{\tilde{f}}(X))$, the largest nonnegative integer $v$ such that $(X-1)^v$ divides $\chi_{\tilde{f}}(X)$, is of interest, and we pose the following open problem:

\begin{problem}\label{openProb2}
For each given positive integer $n$, define a set $M_n$ of nonnegative integers as follows:
\[
M_n:=\{\nu_{X-1}(\chi_{\tilde{f}}(X)): \tilde{f}\text{ is a De Bruijn cycle of }\IF_2^n\}.
\]
Describe the sets $M_n$. For example, what are the elements of $M_{100}$?
\end{problem}

\bibliographystyle{plain}
\makeatletter
\renewcommand*{\@biblabel}[1]{\hfill[#1]}
\makeatother
\bibliography{paper}
\end{document}